\DeclareFontFamily{U}{mathx}{}
\DeclareFontShape{U}{mathx}{m}{n}{<-> mathx10}{}
\DeclareSymbolFont{mathx}{U}{mathx}{m}{n}
\DeclareMathAccent{\widehat}{0}{mathx}{"70}
\DeclareMathAccent{\widecheck}{0}{mathx}{"71}
\DeclareMathOperator\supp{supp}
\DeclareMathOperator\Bohr{Bohr}
\DeclareMathOperator\Spec{Spec}
\DeclareMathOperator\rk{rk}
\renewcommand{\bibnamedash}{\leavevmode\raise3pt\hbox to3em{\hrulefill}\space}
\date{Juin 2022}
\title{Recent progress on bounds for sets with no three terms in arithmetic progression}
\author{Sarah Peluse}
\address{School of Mathematics \\ Institute for Advanced Study \\ 1 Einstein Drive \\
  Princeton, NJ 08540 \\ USA}
\email{speluse@princeton.edu}
\newcommand{\coloneqq}{\mathrel{\mathop:}=}
\newcommand{\eqqcolon}{=\mathrel{\mathop:}}
\newtheorem*{roughtheo}{Rough Theorem Statement}
\begin{document}

\maketitle

\section*{Introduction}

Van der Waerden's theorem \parencite{vanderWaerden1927}, one of the foundational results
of Ramsey theory, states that if the integers are partitioned into finitely many sets,
then one of these sets must contain nontrivial arithmetic progressions,
\begin{equation}
  \label{eq:ap}
  x,x+y,\dots,x+(k-1)y,
\end{equation}
of all lengths. Here \emph{nontrivial} means that $y\neq 0$ in~\eqref{eq:ap}. Motivated
by van der Waerden's result, \textcite{ErdosTuran1936} conjectured that every subset of
the integers with positive upper density must contain arithmetic progressions of all
lengths, or, equivalently, that any subset $A$ of the first $N$ integers containing no
$k$-term arithmetic progressions satisfies $|A|=o_k(N)$. Thus, van der Waerden's theorem
should hold because, in any finite partition of the integers, some part must have positive
density.

Since any two distinct integers form a two-term arithmetic progression, the first
nontrivial case of Erd\H{o}s and Tur\'an's conjecture is when $k=3$. Define $r_3(N)$
to be the size of the largest subset of the first $N$ integers containing no nontrivial
arithmetic progressions, so that the $k=3$ case of the conjecture is equivalent to
$r_3(N)=o(N)$. This was first proven by \textcite{Roth1953}, who even produced an explicit
bound for $r_3(N)$, using a variant of the circle method.
\begin{theo}[\cite{Roth1953}]
  \label{thm:Roth}
  We have
  \begin{equation*}
    r_3(N)=O\left(\frac{N}{\log\log{N}}\right).
  \end{equation*}
\end{theo}
\textcite{Szemeredi1975} proved Erd\H{o}s and Tur\'an's conjecture in full generality via
a purely combinatorial argument in which he introduced his famous regularity lemma for
graphs, now a fundamental tool in graph theory. There are now many proofs of Szemer\'edi's
Theorem, most notably Furstenberg's proof using ergodic
theory \parencite{Furstenberg1977}, in which he introduced his famous correspondence
principle and launched the field of ergodic Ramsey theory, and Gowers's proof of explicit
quantitative bounds in Szemer\'edi's theorem \parencite{Gowers1998,Gowers2001}, which
initiated the study of higher-order Fourier analysis.

We will, for the remainder of this exposition, mostly restrict our discussion to sets
lacking three-term arithmetic progressions. It is now a central open problem in additive
combinatorics to determine the best possible bounds in Roth's theorem, i.e., to
determine the size of the largest subset of the first $N$ integers containing no nontrivial
three-term arithmetic progressions. This problem has catalyzed many important developments
in additive and extremal combinatorics, spurring the invention of techniques that have had
wide-ranging applications.

Beginning around the 1940's, Erd\H{o}s repeatedly posed the conjecture that any
subset~$S$ of the natural numbers satisfying
\begin{equation*}
  \sum_{n\in S}\frac{1}{n}=\infty
\end{equation*}
must contain arithmetic progressions of all lengths. It was also a very old, folklore
conjecture that the primes contain arbitrarily long arithmetic progressions, and Erd\H{o}s
was interested in whether the primes (whose sum of reciprocals diverges) must contain
arbitrarily long arithmetic progressions simply because they are sufficiently dense.  This
folklore conjecture is now known to be true thanks to celebrated work of
\textcite{GreenTao2008}, who leveraged the pseudorandomness of the primes in their
proof. Upper density and the divergence rate of $\sum_{n\in S}\frac{1}{n}$ are not quite
equivalent notions of size, but, by partial summation, a bound of the quality
$O_k\left(\frac{N}{(\log{N})^{1+c}}\right)$, where $c>0$, for the size of the largest
subset of the first $N$ integers containing no $k$-term arithmetic progressions would be
sufficient to prove Erd\H{o}s's conjecture. Over the past few decades, a sequence of works
had improved Roth's bound right up to the $O\left(\frac{N}{\log{N}}\right)$ barrier. The
table below summarizes these developments, where the second column lists bounds for the
order of magnitude of $r_3(N)$ obtained by the authors in the first column.

\begin{center}
\begin{tabular}{ c|c } 
  \textcite{Roth1953} & $\frac{N}{\log\log{N}}$  \\
 \textcite{HeathBrown1987} and \textcite{Szemeredi1990} & $\frac{N}{(\log{N})^c}$  \\ 
  \textcite{Bourgain1999} & $\frac{N}{(\log{N})^{1/2-o(1)}}$  \\
  \textcite{Bourgain2008} & $\frac{N}{(\log{N})^{2/3-o(1)}}$ \\
  \textcite{Sanders2012} & $\frac{N}{(\log{N})^{3/4-o(1)}}$ \\
  \textcite{Sanders2011} & $\frac{N(\log\log{N})^6}{\log{N}}$ \\
  \textcite{Bloom2016} & $\frac{N(\log\log{N})^4}{\log{N}}$ \\
  \textcite{Schoen2021} & $\frac{N(\log\log{N})^{3+o(1)}}{\log{N}}$ \\
\end{tabular}
\end{center}
Here the $c$ appearing in the second row is a small positive constant, the $-o(1)$ in the
exponent of $\log{N}$ in the third, fourth, and fifth rows hides bounded powers of
$\log\log{N}$ in the numerator, and the $o(1)$ in the exponent of $\log\log{N}$ in the
last row hides a bounded power of $\log\log\log{N}$.

Schoen's record upper bound for $r_3(N)$ appeared on the arXiv in May of 2020. Two
months later, \textcite{BloomSisask2020} announced that they had finally broken through
the $O\left(\frac{N}{\log{N}}\right)$ barrier in Roth's theorem, thus proving the first
nontrivial case of Erd\H{o}s's conjecture.
\begin{theo}[\cite{BloomSisask2020}]
  \label{thm:bs}
  There exists an absolute constant $c>0$ such that
  \begin{equation*}
    r_3(N)=O\left(\frac{N}{(\log{N})^{1+c}}\right).
  \end{equation*}
\end{theo}
Therefore, any set $S$ of natural numbers satisfying $\sum_{n\in S}\frac{1}{n}=\infty$
must contain a three-term arithmetic progression. Such sets include positive density
subsets of the prime numbers, so that Theorem~\ref{thm:bs} also implies Green's Roth
theorem in the primes~\parencite{Green2005R}.

We will now briefly discuss the known lower bounds for $r_3(N)$. By considering the
integers whose ternary expansion contains no twos, it is easy to see that
$r_3(N)=\Omega(N^{\log{2}/\log{3}})$. \textcite{SalemSpencer1942} constructed subsets of the first $N$ integers
of density $\exp(-\log{N}/\log\log{N})$ lacking three-term arithmetic progressions, showing
that the true order of magnitude of $r_3(N)$ is larger than $N^{1-\varepsilon}$ for any fixed
\mbox{$\varepsilon>0$.} For this reason, sets free of three-term arithmetic progressions are
sometimes called Salem--Spencer sets. A construction of \textcite{Behrend1946} shows that
$r_3(N)=\Omega(N/\exp(C\sqrt{\log{N}}))$ for some absolute constant $C>0$, which is still
essentially the best known lower bound.

There is, then, the natural question of whether the true order of magnitude of $r_3(N)$ is
closer to Behrend's lower bound or the upper bound of Bloom and Sisask. \textcite{SchoenSisask2016}
have proven bounds of the form $O\left(N/\exp(C(\log{N})^{1/7})\right)$ for subsets of the
first $N$ integers having no nontrivial solutions to the equation $x+y+z=3w$. Since
three-term arithmetic progressions are solutions to the equation $x+y=2z$, it is
reasonable to guess, by analogy, that $r_3(N)$ should also be on the order of
$N/\exp(C(\log{N})^c)$ for some absolute constants $C,c>0$. Experts have, for a while,
thought that a bound of this form is closer to the truth than, say,
$\frac{N}{(\log{N})^{100}}$, though it appears no one was brave enough to write down a conjecture. Bloom
and Sisask have finally conjectured this in their paper, and they do not just reason by
analogy--several of the steps of their proof are efficient enough to produce a bound of the
form $O\left(N/\exp(C(\log{N})^c)\right)$.

When $G$ is a finite abelian group of odd order, it is also natural to define $r_3(G)$ to
be the size of the largest subset of $G$ containing no nontrivial three-term arithmetic
progressions, and to ask for upper and lower bounds on $r_3(G)$. Obtaining bounds for
$r_3(\mathbf{Z}/M\mathbf{Z})$ as $M$ tends to infinity is essentially equivalent to
obtaining bounds in Roth's theorem in the integer setting. Another family of groups of
great interest are the finite dimensional $\mathbf{F}_3$-vector spaces. Subsets of
$\mathbf{F}_3^n$ lacking three-term arithmetic progressions are called \emph{cap-sets},
and the problem of bounding $r_3(\mathbf{F}_3^n)$, known as the \emph{cap-set problem},
has an old history. Nontrivial three-term arithmetic progressions are exactly the lines in~$\mathbf{F}_3^n$, and, more generally, sets (in finite, real, or complex affine or
projective space) with no-three-on-a-line are popular objects of study in discrete and
combinatorial geometry.

\textcite{BrownBuhler1982} were the first to prove $r_3(\mathbf{F}_3^n)=o(3^n)$. This
fact, like $r_3(N)=o(N)$, is also a straightforward consequence of the \emph{triangle
  removal lemma}, which states that, for every $\varepsilon>0$, there exists a $\delta>0$
such that any graph on $M$ vertices containing $\delta M^3$ triangles can be made
triangle-free by removing at most $\varepsilon M^2$ edges. This was observed by
\textcite{FranklGrahamRodl1987}, who then asked whether there exists a positive constant
$c<3$ such that $r_3(\mathbf{F}_3^n)=O(c^n)$. \textcite{AlonDubiner1993} also posed this
question. By adapting Roth's argument to the setting of $\mathbf{F}_3$-vector spaces,
\textcite{Meshulam1995} proved the first explicit bounds for the size of cap-sets.

\begin{theo}[\cite{Meshulam1995}]
  \label{thm:Meshulam}
  We have
  \begin{equation*}
    r_3(\mathbf{F}_3^n)=O\left(\frac{3^n}{n}\right).
  \end{equation*}
\end{theo}
The quantity $3^n$, which is the size of $\mathbf{F}_3^n$, is analogous to the length $N$
of the interval $\{1,\dots,N\}$ in Roth's theorem. Thus, Meshulam's result corresponds to a
bound of the strength $O\left(\frac{N}{\log{N}}\right)$ in Roth's theorem.

The family of vector spaces $(\mathbf{F}_3^n)_{n=1}^\infty$ can serve as a useful testing
ground for ideas and techniques to improve Roth's theorem in the integer setting, since
many technical aspects are greatly simplified when working in $\mathbf{F}_3^n$. The
surveys by \textcite{Green2005} and \textcite{Wolf2015} give nice overviews of this
philosophy. The setting of vector spaces over finite fields is often referred to in
additive combinatorics as the ``finite field model setting'', and we will also use this
terminology. In breakthrough work, \textcite{BatemanKatz2012} proved that
$r_3(\mathbf{F}_3^n)=O\left(\frac{3^n}{n^{1+c}}\right)$ for some absolute constant $c>0$,
and their insights obtained in the finite field model setting were crucial in the work
of \textcite{BloomSisask2020} in the integer setting.

Up until a few years ago, all quantitative improvements to the arguments of Roth and
Meshulam were (increasingly more difficult and technical) refinements of Roth's original
Fourier-analytic argument. In 2016, \textcite{CrootLevPach2016} introduced a new version
of the polynomial method, which they used to prove that any subset of
$(\mathbf{Z}/4\mathbf{Z})^n$ lacking three-term arithmetic progressions has cardinality at
most $O(3.61^n)$, greatly improving upon the previous best bound of
$O\left(\frac{4^n}{n(\log{n})^{c}}\right)$ due to \textcite{Sanders2009}.  Very shortly
after, \textcite{EllenbergGijswijt2017} adapted the method of Croot, Lev, and Pach to
prove a power-saving bound for the size of cap-sets, thus answering the question of Frankl, Graham,
and R\"odl.
\begin{theo}[\cite{EllenbergGijswijt2017}]
  \label{thm:eg}
  We have
  \begin{equation*}
    r_3(\mathbf{F}_3^n)=O(2.756^n).
  \end{equation*}
\end{theo}
The arguments of Croot--Lev--Pach and Ellenberg--Gijswijt are completely disjoint from the
prior Fourier-analytic arguments, and constitute yet another instance of the polynomial
method producing an elegant solution to a famous problem, joining (among other works)
Dvir's solution of the finite field Kakeya problem \parencite{Dvir2009} and the work of
Guth and Katz on the joints problem \parencite{GuthKatz2010} and the Erd\H{o}s distinct
distances problem \parencite{GuthKatz2015}. \textcite{Edel2004} has constructed cap-sets in
$\mathbf{F}_3^n$ of size $\Omega(2.217^n)$, so there is still an exponential gap between
the best known upper and lower bounds for $r_3(\mathbf{F}_3^n)$.

In this exposition, we will survey the methods going into the two breakthrough results
stated in Theorems~\ref{thm:bs} and~\ref{thm:eg}. We will begin by introducing Roth's
basic method in the finite field model and integer settings in Section~\ref{sec:Roth}, and
then give an overview of most of the ingredients in Bloom and Sisask's argument in
Section~\ref{sec:Background} before discussing their proof, with a focus on spectral
boosting, in Section~\ref{sec:BS}. We will then present a full proof of
Theorem~\ref{thm:eg} in Section~\ref{sec:EG}.

\section*{Acknowledgements}
I would like to thank Thomas Bloom, Jordan Ellenberg, Ben Green, Olof Sisask, Kannan
Soundararajan, Avi Wigderson, and Nicolas Bourbaki for helpful comments on earlier
drafts. I am supported by the NSF Mathematical Sciences Postdoctoral Research Fellowship Program under Grant No. DMS-1903038 and the Oswald Veblen fund

\section{The density-increment method and Roth's theorem}\label{sec:Roth}

We begin by fixing notation and normalizations. Along with the standard asymptotic
notation $O,\Omega,$ and $o$, we will frequently use Vinogradov's notation $\ll,\gg,$ and
$\asymp$. As a reminder, for any nonnegative real numbers $A$, $B$, $A'$, and $B'$, the
relations $A=O(B)$, $B=\Omega(A)$, $A\ll B$, and $B\gg A$ all mean that $A\leq CB$ for
some absolute constant $C>0$, and $A'\asymp B'$ means that both $A'\ll B'$ and $B'\ll
A'$. We will write $O(B)$ to denote a quantity that is $\ll B$ and $\Omega(A)$ to denote a
quantity that is $\gg A$. For any $\alpha>0$, we will write $A\lesssim_\alpha B$ to mean
that $A=O\left(\log(1/\alpha)^CB\right)$ for some absolute constant~$C$, and use
$\tilde{O}_\alpha(B)$ to denote a quantity that is $\lesssim_{\alpha}B$. We will also use
the standard notation $[N]\coloneqq \{1,\dots,N\}$, $e(z)\coloneqq e^{2\pi i z}$, and
$e_p(z)\coloneqq e(z/p)$.

Let $X$ be a finite, nonempty set, and $f\colon X\to\mathbf{C}$. The average of $f$ over $X$ is
denoted by
\[
  \mathbf{E}_{x\in X}f(x)\coloneqq \frac{1}{|X|}\sum_{x\in X}f(x).
\]
For any finite abelian group $G$, we define the $L^p$ and $\ell^p$ norms by
\begin{equation*}
  \|g\|_{L^p}^p\coloneqq \mathbf{E}_{x\in G}|g(x)|^p\qquad\text{and}\qquad\|g\|_{\ell^p}^p\coloneqq \sum_{x\in G}|g(x)|^p,
\end{equation*}
respectively, whenever $g\colon G\to\mathbf{C}$. Let $\widehat{G}$ denote the set of
characters of $G$. For any $h\colon G\to\mathbf{C}$ and $\xi\in \widehat{G}$, we define
the Fourier coefficient of~$h$ at~$\xi$ by
\[
  \widehat{h}(\xi)\coloneqq \mathbf{E}_{x\in G}h(x)\overline{\xi(x)}
\]
and the inverse Fourier transform for $F\colon\widehat{G}\to\mathbf{C}$ by
\begin{equation*}
  \widecheck{F}(x)\coloneqq \sum_{\xi\in \widehat{G}}F(\xi)\xi(x).
\end{equation*}
With this choice of normalization, the Fourier inversion formula and Plancherel's theorem
are
\[
  h(x)=\sum_{\xi\in \widehat{G}}\widehat{h}(\xi)\xi(x)\qquad\text{and}\qquad\mathbf{E}_{x}g(x)\overline{h(x)}=\sum_{\xi\in\widehat{G}}\widehat{g}(\xi)\overline{\widehat{h}(\xi)},
\]
respectively. We normalize the inner product by $\langle g,h\rangle\coloneqq\mathbf{E}_{x\in
  G}g(x)\overline{h(x)}$, convolution by $(g*h)(x)\coloneqq \mathbf{E}_{y\in G}g(x-y)h(y)$,
so that $\widehat{g*h}=\widehat{g}\cdot\widehat{h}$, and,
following \textcite{BloomSisask2020}, also define $g\circ h\coloneqq g*h_{-}$, where
$h_{-}(x)\coloneqq \overline{h(-x)}$.

For $G$ a finite abelian group and $A\subset G$, we denote the density of $A$ in $G$ by
$\mu_G(A)\coloneqq |A|/|G|$, and sometimes drop the subscript when the ambient group is clear. When $A$ is
nonempty, we will also denote the normalized indicator function of $A$ by
$\mu_A\coloneqq \frac{1}{\mu(A)}1_A$.

\subsection{The density-increment method for three-term arithmetic progressions}

Every improvement over Roth's bound for $r_3(N)$ has been based on Roth's original
argument. In this section, we will review his method (in a more modern formulation),
giving full proofs of Theorems~\ref{thm:Roth} and~\ref{thm:Meshulam}.

Roth's proof proceeds by a downward induction on density which, slightly rephrased, has
become a standard technique in additive combinatorics known as the
\emph{density-increment method}. The basic idea of the argument is that a subset of
$[N]$ or $\mathbf{F}_3^n$ either has many three-term arithmetic progressions, or else the
set has particularly large density on some nice, ``structured'' subset of $[N]$ or
$\mathbf{F}_3^n$. The structured subset resembles $[N]$ or $\mathbf{F}_3^n$ closely enough
that one can repeat the argument, except now with a subset of greater density. Since
density cannot go above one, such an iteration must terminate, at which point the set
under consideration must contain many three-term arithmetic progressions. We can then
retrace the steps of the iteration to derive an upper bound for the density of any set
lacking three-term arithmetic progressions. When working in~$\mathbf{F}_3^n$, the
structured subsets are subspaces of bounded codimension, and when working in $[N]$ or
$\mathbf{Z}/N\mathbf{Z}$, the structured subsets are either long arithmetic progressions
or (regular) Bohr sets of bounded rank.

In both the proof of Roth's theorem and the proof of Meshulam's theorem, we will derive a
density-increment when a set lacks three-term arithmetic progressions by using the
following Fourier-analytic identity: If $G$ is an abelian group and
$f,g,h\colon G\to\mathbf{C}$, then
\begin{equation}
  \label{eq:fourier}
  \mathbf{E}_{x,y\in G}f(x)g(x+y)h(x+2y)=\sum_{\xi\in\widehat{G}}\widehat{f}(\xi)\widehat{g}(-2\xi)\widehat{h}(\xi).
\end{equation}
This can easily be shown by inserting the Fourier inversion formula for the functions $f$,
$g$, and $h$ on the left-hand side and using orthogonality of characters.

\subsection{Meshulam's theorem}

We will present the proof of Meshulam's theorem before that of Roth's theorem, since the
technical details are simpler in the finite field model setting. The argument relies on
the following density-increment lemma.
\begin{theo}
  \label{thm:Meshulaminc}
Set $N\coloneqq 3^n$, and let $A\subset \mathbf{F}_3^n$ be a cap-set of density $\alpha$. Then either
\begin{equation}
  \label{eq:Nlarge}
  N<\frac{2}{\alpha^2},
\end{equation}
or there exists an affine subspace $H$ of $\mathbf{F}_3^n$ of codimension $1$ on which $A$
has density substantially larger than $\alpha$:
\begin{equation}
  \label{eq:densityinc}
  \frac{|A\cap H|}{|H|}\geq\alpha+\frac{\alpha^2}{4}.
\end{equation}
\end{theo}
\begin{proof}
  Suppose that~\eqref{eq:Nlarge} fails to hold, so that $N\geq\frac{2}{\alpha^2}$. By the
  identity~\eqref{eq:fourier},
\begin{equation}
  \label{eq:Afourier}
  \mathbf{E}_{x,y\in \mathbf{F}_3^n}1_A(x)1_A(x+y)1_A(x+2y)=\alpha^3+\sum_{0\neq \xi\in\mathbf{F}_3^n}\widehat{1_A}(\xi)^2\widehat{1_A}(-2\xi),
\end{equation}
while, since $A$ is a cap-set,
\begin{equation*}
  \mathbf{E}_{x,y\in \mathbf{F}_3^n}1_A(x)1_A(x+y)1_A(x+2y)=\frac{1}{N}\mathbf{E}_{x\in\mathbf{F}_3^n}1_A(x)^3=\frac{\alpha}{N}\leq\frac{\alpha^3}{2},
\end{equation*}
which together imply that the sum over the nontrivial characters on the right-hand side
of~\eqref{eq:Afourier} must be large:
\begin{equation*}
  \left|\sum_{0\neq\xi\in\mathbf{F}_3^n}\widehat{1_A}(\xi)^2\widehat{1_A}(-2\xi)\right|\geq\frac{\alpha^3}{2}.
\end{equation*}
By the triangle inequality and Parseval's identity, there exists a nonzero
$\xi\in\mathbf{F}_3^n$ for which $\left|\widehat{1_A}(\xi)\right|\geq\alpha^2/2$. Since the
nontrivial Fourier coefficients of $1_A$ remain unchanged after adding a constant function
to $1_A$, we must have $\left|\widehat{(1_A-\alpha)}(\xi)\right|\geq\alpha^2/2$ as well. That
is,
\begin{equation*}
  \left|\mathbf{E}_{x\in \mathbf{F}_3^n}\left(1_A-\alpha\right)(x)e_3\left(\xi\cdot x\right)\right|\geq\frac{\alpha^2}{2}.
\end{equation*}

Note that the function $e_3(\xi\cdot x)$ is constant on cosets of the codimension $1$
subspace $V\coloneqq \left\{y\in \mathbf{F}_3^n \mid\xi\cdot y=0\right\}$ of
$\mathbf{F}_3^n$. Splitting the average over $x\in \mathbf{F}_3^n$ up into an average of
averages over the cosets of $V$ and applying the triangle inequality then yields
\begin{equation}
  \label{eq:cosetavg}
  \mathbf{E}_{H\in \mathbf{F}_3^n/V}\left|\mathbf{E}_{x\in H}\left(1_A-\alpha\right)(x)\right|\geq\frac{\alpha^2}{2}.
\end{equation}
On the other hand, since $A$ has density $\alpha$, the absolute-value-free version of the
sum in~\eqref{eq:cosetavg} equals zero:
\begin{equation}
  \label{eq:zero}
  \mathbf{E}_{H\in \mathbf{F}_3^n/V}\mathbf{E}_{x\in H}\left(1_A-\alpha\right)(x)=0.
\end{equation}
Adding together~\eqref{eq:cosetavg} and~\eqref{eq:zero} and using the identity
$|r|+r=2\max\left(r,0\right)$ then gives
\begin{equation*}
  \mathbf{E}_{H\in \mathbf{F}_3^n/V}\max\left(\mathbf{E}_{x\in H}\left(1_A-\alpha\right)(x),0\right)\geq\frac{\alpha^2}{4}.
\end{equation*}
By the pigeonhole principle, there must exist some coset $H$ of $V$ such that
\begin{equation*}
  \mathbf{E}_{x\in H}\left(1_A-\alpha\right)(x)\geq\frac{\alpha^2}{4}.
\end{equation*}
Since $\mathbf{E}_{x\in H}\left(1_A-\alpha\right)(x)=\mathbf{E}_{x\in H}1_A(x)-\alpha$,
adding $\alpha$ to both sides of the above yields~\eqref{eq:densityinc}.
\end{proof}

Observe that three-term arithmetic progressions are invariant under affine-linear
transformations, in that if $S\colon V_1\to V_2$ is an affine-linear transformation and
$x,x+y,x+2y$ is a three-term arithmetic progression in~$V_1$, then $S(x),S(x+y),S(x+2y)$
is a three-term arithmetic progression in $V_2$. Further, if $S=v_2+T$ for some invertible
linear transformation~$T$ and vector $v_2\in V_2$, then $S$~maps nontrivial three-term
arithmetic progressions to non-trivial three-term arithmetic progressions. It therefore
follows that if $H$~is a coset of~$V$ in~$\mathbf{F}_3$ of dimension~$m$ and $B\subset H$
is a subset of density~$\beta$ in~$H$ containing no nontrivial three-term arithmetic
progressions, then there exists a cap-set~$B'$ in $\mathbf{F}_3^m$ of density~$\beta$.

Now, suppose that $A\subset\mathbf{F}_3^n$ is a cap-set of density $\alpha$, and set
$A_0\coloneqq A$, $n_0\coloneqq n$, and $\alpha_0\coloneqq \alpha$. Repeatedly applying the density-increment
lemma and utilizing the above observation produces a sequence of triples
$(A_i,n_i,\alpha_i)$ satisfying
\begin{enumerate}
\item $A_i\subset\mathbf{F}_3^{n_i}$ is a cap-set of density $\alpha_i$,
\item $n_{i+1}=n_{i}-1$, and
\item $\alpha_{i+1}\geq\alpha_i+\frac{\alpha_i^2}{4}$,
\end{enumerate}
provided that $N_i\geq\frac{2}{\alpha_i^2}$. Since the density cannot exceed $1$, by the
lower bound $\alpha_{i+1}\geq\alpha_i+\frac{\alpha_i^2}{4}$, this iteration must terminate
for some $i=i_0\leq \frac{16}{\alpha}$, say. At this point, the largeness assumption on $N_i$ must
fail, so that $N_{i_0}<\frac{2}{\alpha_i^2}\leq\frac{2}{\alpha^2}$. On the other hand, since
$n_{i+1}=n_{i}-1$ for all $i<i_0$, we have $N_{i_0}=3^{n-i_0}\geq
3^{n-16/\alpha}$. Combining these upper and lower bounds, we obtain
\begin{equation*}
  3^{n}<\frac{3^{16/\alpha}}{\alpha^2/2}.
\end{equation*}
Taking $\log_3$ of both sides yields $n<16/\alpha-\log_3(\alpha^2/2)<32/\alpha$, say, so
that $\alpha\ll 1/n$, thus proving Meshulam's theorem.

\subsection{Roth's theorem}

Analogously to the finite field model setting, our proof of Roth's theorem relies on the
following density-increment lemma.
\begin{theo}
  \label{thm:Rothinc}
  Let $A$ be a subset of $[N]$ of density $\alpha$ containing no nontrivial three-term
  arithmetic progressions. Then either
  \begin{equation}
    \label{eq:RothNlarge}
    N<\frac{8}{\alpha^2},
  \end{equation}
  or there exists a long arithmetic progression $P=a+q[N']$, with
  $N'\geq \alpha^4\sqrt{N}/2^{21}$, on which $A$ has density substantially larger than
  $\alpha$:
  \begin{equation*}
    \frac{|A\cap P|}{|P|}\geq\alpha+\frac{\alpha^2}{2^{11}}.
  \end{equation*}
\end{theo}
Before proving this result, we will recall Dirichlet's theorem on Diophantine
approximation, which is a simple consequence of the pigeonhole principle.

\begin{theo}
  \label{thm:Dirichlet}
  Let $\gamma_1,\dots,\gamma_k$ be real numbers. For any positive integer $Q$, there exist
  integers $p_1,\dots,p_k$ and $1\leq q\leq Q$ such that
  \begin{equation*}
    \left|\gamma_i-\frac{p_i}{q}\right|<\frac{1}{qQ^{1/k}}
  \end{equation*}
  for all $1\leq i\leq k$.
\end{theo}

Now we can prove Theorem~\ref{thm:Rothinc}.
\begin{proof}
  Suppose that~\eqref{eq:RothNlarge} fails to hold, so that $N\geq\frac{8}{\alpha^2}$. We
  begin by letting $p$ be any prime number between $2N$ and $4N$, which must exist by
  Bertrand's postulate, and noting that any three-term arithmetic progression in $[N]$
  viewed as a subset of $\mathbf{Z}/p\mathbf{Z}$ corresponds to a genuine three-term
  arithmetic progression in $[N]$. Thus, the number of three-term arithmetic progressions
  in $A$ equals
  \begin{equation}
    \label{eq:3apcount}
    \sum_{x,y\in\mathbf{Z}/p\mathbf{Z}}1_A(x)1_A(x+y)1_A(x+2y).
  \end{equation}
  Letting $f_A\coloneqq 1_A-\alpha 1_{[N]}$ denote the balanced function of
  $A$,~\eqref{eq:3apcount} can be
  written as the sum of the three terms,
  \begin{equation}
    \label{eq:term1}
    \sum_{x,y\in\mathbf{Z}/p\mathbf{Z}}1_A(x)1_A(x+y)f_A(x+2y),
  \end{equation}
  \begin{equation}
    \label{eq:term2}
    \alpha\sum_{x,y\in\mathbf{Z}/p\mathbf{Z}}1_A(x)f_A(x+y)1_{[N]}(x+2y),
  \end{equation}
  and
  \begin{equation}
    \label{eq:term3}
    \alpha^2\sum_{x,y\in\mathbf{Z}/p\mathbf{Z}}1_A(x)1_{[N]}(x+2y).
  \end{equation}
  The quantity~\eqref{eq:term3} is at least $\alpha^3N^2/4\geq 2\alpha N$. On the other
  hand, by assumption,~\eqref{eq:3apcount} equals $|A|=\alpha N$, so that at least one of the
  terms~\eqref{eq:term1} or~\eqref{eq:term2} must have magnitude at least
  $\alpha^3N^2/8\geq \alpha^3p^2/128$. Arguing as in the finite field model setting, it follows that there
  exists a nonzero integer $1\leq \xi\leq p-1$ such that
  \begin{equation}
    \label{eq:largefourierint}
    \left|\sum_{x\in\mathbf{Z}/p\mathbf{Z}}f_A(x)e\left(\frac{\xi x}{p}\right)\right|\geq\frac{\alpha^2}{2^{7}}p.
  \end{equation}

  Now we apply Dirichlet's theorem with $Q=\left\lceil\sqrt{p}\right\rceil$ to get that there exist integers $a$
  and $1\leq q\leq Q$ and a real number $0\leq \theta<1$ for which
  \begin{equation*}
    \frac{\xi}{p}=\frac{a}{q}+\frac{\theta}{q\sqrt{p}}.
  \end{equation*}
  The group $\mathbf{Z}/p\mathbf{Z}$ can be partitioned into at least
  $2^{10}\lfloor\sqrt{p}\rfloor/\alpha^2$ arithmetic progressions $P_1,\dots,P_K$ modulo
  $p$ of length $N'\coloneqq \lceil \alpha^2\sqrt{p}/2^{10}\rceil$ and common
  difference~$q$, along with~$q$ (possibly empty) arithmetic progressions
  $P_1',\dots,P_q'$ modulo $p$ of length at most $N'-1$ and common difference~$q$. It therefore
  follows from~\eqref{eq:largefourierint} that
  \begin{equation*}
    \sum_{i=1}^K\left|\sum_{x\in P_i}f_A(x)e\left(\frac{\theta x}{q\sqrt{p}}\right)\right|+\sum_{j=1}^q\left|\sum_{x\in P_j'}f_A(x)e\left(\frac{\theta x}{q\sqrt{p}}\right)\right|\geq\frac{\alpha^2}{2^7}p.
  \end{equation*}
  Note that $e(\theta x/q\sqrt{p})$ and $e(\theta y/q\sqrt{p})$ differ by a quantity of
  magnitude at most $\alpha^2/2^8$ for all pairs $x,y\in P_i$ or $x,y\in P_j'$. Thus,
  \begin{equation*}
    \sum_{i=1}^K\left|\sum_{x\in P_i}f_A(x)\right|+\sum_{j=1}^q\left|\sum_{x\in P_j'}f_A(x)\right|\geq\frac{\alpha^2}{2^8}p.
  \end{equation*}
  As in the finite field model setting, since $P_1,\dots,P_K,P_1',\dots,P_q'$ partition
  $\mathbf{Z}/p\mathbf{Z}$, combining the above with the fact that $f_A$ has mean zero on
  $\mathbf{Z}/p\mathbf{Z}$ yields
  \begin{equation*}
    \sum_{i=1}^K\max\left(\sum_{x\in P_i}f_A(x),0\right)+\sum_{j=1}^q\max\left(\sum_{x\in P_j'}f_A(x),0\right)\geq\frac{\alpha^2}{2^9}p.
  \end{equation*}
  The contribution of the second sum on the left-hand side of the above is at most
  $\alpha^2q\sqrt{p}/2^{10}<\alpha^2p/2^{10}$, so that
  \begin{equation*}
    \sum_{i=1}^K\max\left(\sum_{x\in P_i}f_A(x),0\right)\geq\frac{\alpha^2}{2^{10}}p.
  \end{equation*}

  By the pigeonhole principle, there is an $1\leq i\leq K$ such that
  $\frac{|A\cap P_i|}{|P_i|}\geq \alpha+\frac{\alpha^2}{2^{10}}$. The progression $P_i$ is
  an arithmetic progression in $\mathbf{Z}/p\mathbf{Z}$, not in the integers, so it
  remains to find a density-increment on an integer arithmetic progression. Note that
  $qN'<p$, so $P_i$ is the union $P_i=R\cup S$ of two disjoint arithmetic progressions in
  $[p]$ with common difference $q$. We may, without loss of generality, assume that
  $|R|\geq |S|$. The set $A$ must certainly have density at least
  $\alpha+\frac{\alpha^2}{2^{11}}$ on at least one of $R$ or $S$. If
  $|S|\geq \frac{\alpha^2}{2^{11}}N'$, then both $R$ and $S$ are sufficiently large and we
  have the desired density-increment on at least one of them. If
  $|S|<\frac{\alpha^2}{2^{11}}N'$, then $|R|\geq N'/2$, say, and
  $|A\cap R|\geq\left(\alpha+\frac{\alpha^2}{2^{11}}\right)N'$, so that
  $\frac{|A\cap R|}{|R|}\geq\alpha+\frac{\alpha^2}{2^{11}}$ since $|R|\leq N'$ and we
  again have the desired density-increment.
\end{proof}

Analogously to the finite field model setting, observe that three-term arithmetic
progressions are translation-dilation invariant, so that if $B$ contains no nontrivial
three-term arithmetic progressions, then $B'\coloneqq \{n\in[N']\mid a+qn\in B\cap P\}$ has density
$\frac{|B\cap P|}{|P|}$ in $[N']$ and also contains no nontrivial three-term arithmetic
progressions.

Now, suppose that $A\subset[N]$ has density $\alpha$ and contains no nontrivial three-term
arithmetic progressions, and set $A_0\coloneqq A$, $N_0\coloneqq  N$, and $\alpha_0\coloneqq \alpha$. Repeated
applications of the density-increment lemma produces a sequence of triples
$(A_i,N_i,\alpha_i)$ satisfying
\begin{enumerate}
\item $A_i\subset[N_i]$ has density $\alpha_i$ and contains no nontrivial three-term
  arithmetic progressions,
\item $N_{i+1}\geq \alpha^4\sqrt{N_i}/2^{21}$, and
\item $\alpha_{i+1}\geq\alpha_i+\frac{\alpha_i^2}{2^{11}}$,
\end{enumerate}
provided that $N_i\geq\frac{8}{\alpha_i^2}$. As in the density-increment for Meshulam's
theorem, this iteration must terminate for some $i_0\ll\frac{1}{\alpha}$, at which point
the largeness assumption must fail, so that $N_{i_0}<\frac{8}{\alpha^2}$. On the other
hand, we have $N_{i_0}\gg\alpha^{8}N^{1/2^{i_0}}\gg\alpha^8
N^{1/2^{O(1/\alpha)}}$. Combining these upper and lower bounds yields
\begin{equation*}
  N^{1/2^{O(1/\alpha)}}\ll\frac{1}{\alpha^{10}},
\end{equation*}
from which Roth's theorem follows by taking the double logarithm of both sides when $N$~is
sufficiently large.

\section{Key ingredients from prior quantitative improvements}\label{sec:Background}
Inspecting the proofs of Roth's theorem and Meshulam's theorem, we see that we obtained
worse bounds in the former because the structured set on which we found a
density-increment shrinks much more rapidly ($N_{i+1}\asymp\alpha^{O(1)} \sqrt{N_i}$) in
the integer setting than in the finite field model setting ($N_{i+1}\asymp N_i$). Thus,
Theorem~\ref{thm:Rothinc} is much less efficient than Theorem~\ref{thm:Meshulaminc} to
iterate. Therefore, for a long time, the goal of much of the work on quantitative bounds
in Roth's theorem had been to obtain density-increment results in the integer setting that
are as efficient as that obtained in Theorem~\ref{thm:Meshulaminc}. This eventually led to
four different proofs of the bound $r_3(N)\ll\frac{N}{(\log{N})^{1-o(1)}}$. The argument
of Bloom and Sisask relies on many insights made in these prior works, along
with those that allowed Bateman and Katz to go beyond the
$O\left(\frac{N}{\log{N}}\right)$ bound in the cap-set problem. The goal of this section
is to summarize these insights and introduce the related concepts needed to understand
Bloom and Sisask's proof.

\subsection{Obtaining a density-increment from large $\ell^2$-energy}
The key insight of \textcite{HeathBrown1987} and \textcite{Szemeredi1990} was that if
$f_A$ has several large Fourier coefficients, then it is more efficient to do one large
density-increment step using all of these coefficients than to do individual
density-increment steps for each of them. To be more precise, the
starting point of their argument is to show that if $A$ contains no nontrivial three-term
arithmetic progressions, then a large proportion of the $\ell^2$-mass of $\widehat{f_A}$
can be captured in a relatively small number of nontrivial Fourier coefficients. Recall
from the proof of Theorem~\ref{thm:Rothinc} that either
\begin{equation*}
  \left|\sum_{0\neq \xi\in\mathbf{Z}/p\mathbf{Z}}\widehat{1_{[N]}}(\xi)^2\widehat{f_A}(-2\xi)\right|>\frac{\alpha}{8}
\end{equation*}
or
\begin{equation*}
  \left|\sum_{0\neq \xi\in\mathbf{Z}/p\mathbf{Z}}\widehat{1_{[N]}}(\xi)\widehat{f_A}(\xi)\widehat{1_{[N]}}(-2\xi)\right|>\frac{\alpha}{8},
\end{equation*}
provided that $N$ is sufficiently large in terms of $\alpha$. In either case, it follows
from H\"older's inequality that
$\|\widehat{1_{[N]}}\|_{\ell^3}^2\|\widehat{f_{A}}\|_{\ell^3}\gg\alpha$, so that
$\|\widehat{f_A}\|_{\ell^3}^3\gg\alpha^3$ since $\|\widehat{1_{[N]}}\|_{\ell^3}\ll
1$. Thus, using the layercake representation and the fact that
$|\widehat{f_A}(\xi)|\leq2\alpha$ for all $\xi\in\mathbf{Z}/p\mathbf{Z}$, we have
\begin{equation*}
  \int_0^{2\alpha} z^2\cdot|\{\xi\in\mathbf{Z}/p\mathbf{Z} \mid |\widehat{f_A}(\xi)|\geq z\}|dz\gg\alpha^3.
\end{equation*}
On the other hand, if it were the case that
\begin{equation*}
  \sum_{\substack{\xi\in \mathbf{Z}/p\mathbf{Z} \\ |\widehat{f_A}(\xi)|\geq
      z}}|\widehat{f_A}(\xi)|^2\leq \frac{\alpha^2}{C}
  |\{\xi\in\mathbf{Z}/p\mathbf{Z}\mid |\widehat{f_A}(\xi)|\geq z\}|^{1/9},
\end{equation*}
say, for all $0\leq z\leq 2\alpha$, then, by bounding the left-hand side below by
$z^2\cdot|\{\xi\in\mathbf{Z}/p\mathbf{Z}\mid |\widehat{f_A}(\xi)|\geq z\}|$, we obtain
$|\{\xi\in\mathbf{Z}/p\mathbf{Z} \mid|\widehat{f_A}(\xi)|\geq z\}|\leq
\alpha^{9/4}z^{-9/4}/C^{9/8}$, which means that
\begin{equation*}
  \int_0^{2\alpha} z^2\cdot|\{\xi\in\mathbf{Z}/p\mathbf{Z} \mid |\widehat{f_A}(\xi)|\geq
  z\}|dz\leq \frac{\alpha^{9/4}}{C^{9/8}}\int_0^{2\alpha}\frac{1}{z^{1/4}}dz\ll\frac{\alpha^3}{C^{9/8}}.
\end{equation*}
Thus, choosing $C$ sufficiently large, we must have
\begin{equation*}
   \sum_{\substack{\xi\in \mathbf{Z}/p\mathbf{Z} \\ |\widehat{f_A}(\xi)|\geq
       z}}|\widehat{f_A}(\xi)|^2\gg
   \alpha^2|\{\xi\in\mathbf{Z}/p\mathbf{Z}\mid |\widehat{f_A}(\xi)|\geq z\}|^{1/9}
 \end{equation*}
 for some $0<z\leq 2\alpha$.

 Now, we enumerate the frequencies
 $\{\xi_1,\dots,\xi_m\}\coloneqq \{\xi\in\mathbf{Z}/p\mathbf{Z}\mid |\widehat{f_A}(\xi)|\geq z\}$ and
 apply Dirichlet's theorem with $Q=p^{m/(m+1)}$ to $\xi_1/p,\dots,\xi_m/p$ to produce
 integers $a_1,\dots,a_m$ and $1\leq q\leq Q$ for which
 $\left|\xi_i/p-a_i/q\right|<1/qQ^{1/m}$ for all $i=1,\dots,m$. Analogously to the proof
 of Theorem~\ref{thm:Rothinc}, we will find a density-increment on an arithmetic
 progression of common difference $q$ and length on the order of $\alpha p^{1/(m+1)}$ by an
 averaging argument. Let $P$ be any arithmetic progression of common difference $q$ and
 length $p^{1/(m+1)}/10$, say, and consider the second moment
 $\mathbf{E}_{x\in\mathbf{Z}/p\mathbf{Z}}(1_A*1_P)(x)^2$ of the density of
 $|A\cap (P-x)|$. We have
\begin{align*}
  \mathbf{E}_{x\in\mathbf{Z}/p\mathbf{Z}}(1_A*1_P)(x)^2 &= \sum_{\xi\in
                                                          \mathbf{Z}/p\mathbf{Z}}|\widehat{1_A}(\xi)|^2|\widehat{1_P}(\xi)|^2
  \\
  &\geq
    \alpha^2\left(\frac{|P|}{p}\right)^2+\sum_{i=1}^k|\widehat{1_A}(\xi_i)|^2|\widehat{1_P}(\xi_i)|^2
  \\
  &\geq \left(\frac{|P|}{p}\right)^2\left(\alpha^2+\Omega(\alpha^2m^{1/9})\right),
\end{align*}
where the second inequality follows from the fact that $|\widehat{1_P}(\xi_i)|\gg |P|/p$ for
all $1\leq i\leq m$. On the other hand,
\begin{equation*}
  \mathbf{E}_{x\in\mathbf{Z}/p\mathbf{Z}}(1_A*1_P)(x)^2\leq\!\!\|1_A*1_P\|_{L^\infty}\cdot\mathbf{E}_{x\in\mathbf{Z}/p\mathbf{Z}}1_A*1_P(x)=\alpha\left(\frac{|P|}{p}\right)\!\|1_A*1_P\|_{L^\infty}.
\end{equation*}
We conclude that there exists an $x\in\mathbf{Z}/p\mathbf{Z}$ for which
$\frac{p}{|P|}1_A*1_P(x)\geq \alpha(1+\Omega(m^{1/9}))$, i.e.,
$|A\cap (P-x)|/|P|\geq \alpha(1+\Omega(m^{1/9}))$. We are not quite done because $P-x$ is
a progression modulo $p$, but since $q|P|<p$, the argument given at the end of the proof
of Theorem~\ref{thm:Rothinc} guarantees that we can find a density-increment of at least
$\alpha(1+\Omega(m^{1/9}))$ on an integer arithmetic progression of length $\gg \alpha p^{1/(m+1)}$ and common
difference $q$.

The following density-increment theorem summarizes what we have shown.
\begin{theo}
  \label{thm:HBSinc}
  Let $A$ be a subset of $[N]$ of density $\alpha$ containing no nontrivial three-term
  arithmetic progressions. Then either
  \begin{equation}
    N<\frac{8}{\alpha^2},
  \end{equation}
  or else there exists an integer $1\leq m\ll\alpha^{-9}$ and a long arithmetic
  progression $P=a+q[N']$, with $N'\gg\ \alpha N^{1/(m+1)}$, on which $A$ has density
  \begin{equation*}
    \frac{|A\cap P|}{|P|}\geq \alpha(1+\Omega(m^{1/9})).
  \end{equation*}
\end{theo}
A bound of the form $r_3(N)\ll\frac{N}{(\log{N})^c}$ can now be obtained by a
straightforward adaptation of the density-increment iteration used to prove Roth's
theorem. Theorem~\ref{thm:HBSinc} is still not as efficient as
Theorem~\ref{thm:Meshulaminc}. In fact, adapting the arguments of this section to the
finite field model setting produces a worse bound for $r_3(\mathbf{F}_3^n)$ than in
Meshulam's theorem. The key idea of using large $\ell^2$-Fourier mass, instead of just one
large Fourier coefficient, to obtain a density-increment will continue to be a useful
insight, however.
  
  \subsection{Bohr sets}

  The proof of Theorem~\ref{thm:Meshulaminc} produces an efficient density-increment
  because the level sets of characters are affine subspaces of $\mathbf{F}_3^n$, which
  allows one to pass immediately from a lower bound of the form
  $|\mathbf{E}_{x\in\mathbf{F}_3^n}f_A(x)e_3(\xi\cdot x)|\gg\alpha^2$ to a
  density-increment on a large structured set. In contrast, most characters of
  $\mathbf{Z}/p\mathbf{Z}$ fluctuate too much on arithmetic progressions of length
  $\asymp p$ for us to have any hope of finding a large density-increment on such a
  progression. Thus, to remove the phase in~\eqref{eq:largefourierint}, we had to
  partition most of $\mathbf{Z}/p\mathbf{Z}$ into a many, much shorter, arithmetic
  progressions, so that $e(\xi x/p)$ was close to constant on each. The key insight
  of \textcite{Bourgain1999} was to simply partition $\mathbf{Z}/p\mathbf{Z}$ exactly into
  the sets $\{x\in\mathbf{Z}/p\mathbf{Z}\mid \|\xi x/p\|\approx z \}$ on which the character
  is close to constant, and to run the density-increment argument relative to them instead
  of relative to long arithmetic progressions.

  These approximate level sets of characters are known as \emph{Bohr sets}. Bohr sets have
  positive density in the ambient group, but behave even less like subgroups than long
  arithmetic progressions. The first useful feature of intervals and subgroups that we
  used in our earlier arguments was the ease of counting the number of three-term
  arithmetic progressions they contain. We showed in both cases that the ambient interval
  or group contained many three-term arithmetic progressions, so that, if a subset $A$
  contained few progressions, some average involving $\widehat{f_A}$ had to be large. In
  contrast, it is very difficult to count three-term arithmetic progressions in general
  Bohr sets. Thus, while he was able to obtain a density-increment on a much larger
  structured set, Bourgain had to pay the price by dealing with the poor behavior of Bohr
  sets under addition.

  We will now formally define Bohr sets and their related parameters, and then state some
  standard facts about them. Many of these can be found in \textcite{Bourgain1999}
  or \textcite[Chapter 4]{TaoVu2006}.
\begin{defi}
  Let $G$ be a finite abelian group, $\Gamma\subset\widehat{G}$ be nonempty, and
  $\nu\colon\Gamma\to[0,2]$. The \emph{Bohr set of rank $|\Gamma|$ and
  width $\nu$ with frequency set $\Gamma$} is defined as the triple
  $(\Gamma,\nu,\Bohr(\Gamma,\nu))$, where
  \begin{equation*}
    \Bohr(\Gamma,\nu)\coloneqq \{x\in G\mid |\gamma(x)-1|\leq\nu(\gamma)\text{ for all }\gamma\in\Gamma\}.
  \end{equation*}
\end{defi}
We will just refer to the Bohr set $(\Gamma,\nu,\Bohr(\Gamma,\nu))$ by
$\Bohr(\Gamma,\nu)$, even though one Bohr set can be generated by many pairs of widths and
frequency sets. Note that Bohr sets are symmetric, contain the identity, and, when
$G=\mathbf{F}_3^n$, a Bohr set of rank $r$ and constant width less than $\sqrt{3}$ is just
a subspace of codimension at most $r$.

While Bohr sets are not nearly as additively structured as long arithmetic progressions,
we still have some control over the size of their sumsets, as the following lemma shows.
\begin{lemm}
  \label{lem:bohrsum}
  We have
  \begin{equation*}
    \Bohr(\Gamma,\nu_1)+\Bohr(\Gamma,\nu_2)\subset \Bohr(\Gamma,\nu_1+\nu_2)
  \end{equation*}
  and
  \begin{equation*}
    |\Bohr(\Gamma,2\nu)|\leq 4^{|\Gamma|}|\Bohr(\Gamma,\nu)|.
  \end{equation*}
\end{lemm}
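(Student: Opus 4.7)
The plan is to handle the two claims separately. For the first, I would unwind the definitions and use multiplicativity of characters together with the triangle inequality. Given $x \in \Bohr(\Gamma,\nu_1)$ and $y \in \Bohr(\Gamma,\nu_2)$, for each $\gamma \in \Gamma$ one writes $\gamma(x+y) - 1 = \gamma(x)(\gamma(y)-1) + (\gamma(x)-1)$ and uses $|\gamma(x)| = 1$ to conclude that
\[
|\gamma(x+y) - 1| \leq |\gamma(y) - 1| + |\gamma(x) - 1| \leq \nu_1(\gamma) + \nu_2(\gamma),
\]
so that $x + y \in \Bohr(\Gamma, \nu_1 + \nu_2)$.

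For the second claim, I would use a standard packing-and-covering argument. Pick a maximal subset $T \subseteq \Bohr(\Gamma, 2\nu)$ with the property that $t - t' \notin \Bohr(\Gamma, \nu)$ for all distinct $t, t' \in T$. By maximality and the symmetry $-\Bohr(\Gamma,\nu) = \Bohr(\Gamma,\nu)$, every $x \in \Bohr(\Gamma, 2\nu)$ must lie in some translate $t + \Bohr(\Gamma, \nu)$ with $t \in T$; otherwise $T \cup \{x\}$ would still be separated, contradicting maximality. This gives $\Bohr(\Gamma, 2\nu) \subseteq T + \Bohr(\Gamma, \nu)$, so
\[
|\Bohr(\Gamma, 2\nu)| \leq |T| \cdot |\Bohr(\Gamma, \nu)|,
\]
and it suffices to prove $|T| \leq 4^{|\Gamma|}$.

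For this last bound, I would pass to the image of $T$ under the map $\pi \colon G \to (S^1)^\Gamma$, $x \mapsto (\gamma(x))_{\gamma \in \Gamma}$. Each coordinate $\pi(t)_\gamma$ lies in the arc $A_\gamma \coloneqq \{z \in S^1 : |z - 1| \leq 2\nu(\gamma)\}$. For distinct $t, t' \in T$ the separation gives some $\gamma$ with $|\gamma(t - t') - 1| > \nu(\gamma)$; since $|\pi(t')_\gamma| = 1$, this is the same as $|\pi(t)_\gamma - \pi(t')_\gamma| > \nu(\gamma)$. Partitioning each $A_\gamma$ into at most four closed sub-arcs of chord-diameter $\leq \nu(\gamma)$ and taking the product of these partitions over $\gamma$ yields $4^{|\Gamma|}$ cells. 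Two distinct elements of $T$ cannot land in the same cell, since that would force $|\pi(t)_\gamma - \pi(t')_\gamma| \leq \nu(\gamma)$ in every coordinate and violate the separation property. The induced map $T \hookrightarrow \{1, \dots, 4\}^{|\Gamma|}$ is therefore injective.

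The only delicate step is the geometric claim that each $A_\gamma$ decomposes into at most four sub-arcs of chord-diameter $\leq \nu(\gamma)$. This is a one-variable trigonometric estimate, and coordinates $\gamma$ with $\nu(\gamma)$ close to the maximal value $2$ impose essentially no Bohr constraint, so one can reduce to the regime where the arcs $A_\gamma$ are short and the partition is straightforward.
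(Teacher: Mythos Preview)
The paper does not prove this lemma; it is quoted as a standard fact with references to Bourgain (1999) and Tao--Vu (Chapter~4), so there is no in-paper argument to compare against.

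Your proof of the first inclusion is correct and standard.

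For the doubling bound your covering strategy is also the right idea and does give a bound of the form $C^{|\Gamma|}\,|\Bohr(\Gamma,\nu)|$, but the geometric claim that four sub-arcs always suffice is false with the chord-distance definition used here. The arc $A_\gamma=\{z:|z-1|\le 2\nu(\gamma)\}$ has angular length $4\arcsin\nu(\gamma)$ when $\nu(\gamma)\le 1$ (and $2\pi$ otherwise), while a sub-arc of chord-diameter at most $\nu(\gamma)$ has angular length at most $2\arcsin(\nu(\gamma)/2)$. The ratio
\[
\frac{2\arcsin\nu(\gamma)}{\arcsin(\nu(\gamma)/2)}
\]
tends to $4$ as $\nu(\gamma)\to 0$ but strictly exceeds $4$ for every positive width and rises to $6$ at $\nu(\gamma)=1$; the full-circle regime $\nu(\gamma)\ge 1$ is no better. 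Hence your pigeonhole step only delivers $|T|\le 6^{|\Gamma|}$, and the closing remark about widths near $2$ does not rescue it, since the shortfall is already present for arbitrarily small widths. The clean constant $4$ that one usually sees belongs to the \emph{arclength} normalisation of Bohr sets, $\|\gamma x\|\le\rho(\gamma)$, where angular length scales linearly with the width and four equal sub-arcs suffice exactly; with the chord definition adopted in this paper one gets a slightly worse absolute constant, which is of no consequence for any of the later applications.
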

Sumsets of Bohr sets are more well-behaved when the width of one of the Bohr sets is very
small. It will therefore be useful to define, when $B=\Bohr(\Gamma,\nu)$ is a Bohr set of
width $\nu$ and $\rho>0$, the \emph{dilation} of $B$ by $\rho$ to be the Bohr set
$B_\rho\coloneqq \Bohr(\Gamma,\rho\nu)$.

Despite having some control on the size of sumsets of Bohr sets from
Lemma~\ref{lem:bohrsum}, Bohr sets can have very large doubling constant $|B+B|/|B|$ when
their rank is not extremely small. This presents a problem when attempting to run a
density-increment argument relative to a Bohr set, since we must, first of all, show that
there are many more than just the trivial three-term arithmetic progressions. If $B+B$ is
much larger than $2\cdot B$, it is not clear that we should expect there to be many
representations of elements of $2\cdot B$ as sums of two elements of $B$.

Bourgain gets around this issue by restricting the common difference of the arithmetic
progressions to lie in $B_\varepsilon$ for some small $\varepsilon$. If
$B+B_{\varepsilon}\approx B$, then it is easy to show that $B$ contains many three-term
arithmetic progressions with common difference in $B_{\varepsilon}$. If $A$ contains no
nontrivial three-term arithmetic progressions, then it certainly has none with common
difference in $B_\varepsilon$, and one can then deduce that some average involving $f_A$
over these three-term arithmetic progressions with restricted difference is large. The
Bohr sets for which we can reliably find such a dilation $B_\varepsilon$ are called
\emph{regular Bohr sets}.
\begin{defi}
  We say that a Bohr set $B$ of rank $r$ is \emph{regular} if, for all real numbers~$\delta$ satisfying $|\delta|\leq\frac{1}{100 r}$, we have
  \begin{equation*}
    (1-100r|\delta|)|B|\leq |B_{1+\delta}|\leq (1+100r|\delta|)|B|.
  \end{equation*}
\end{defi}
Not all Bohr sets are regular, but Bourgain showed that every Bohr set has many dilates
that are regular.
\begin{lemm}
  Let $B$ be a Bohr set. Then, for any $0\leq t\leq 1$, the Bohr set $B_\rho$ is regular for some $t/2\leq\rho\leq t$.
\end{lemm}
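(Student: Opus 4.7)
The plan is to argue by contradiction using a Vitali-type covering argument applied to the monotone non-decreasing function $f(\rho)\coloneqq \log|B_\rho|$. Applying Lemma~\ref{lem:bohrsum} with $\nu=(t/2)\cdot\nu_B$ gives the a priori bound $f(t)-f(t/2)\leq r\log 4$, where $r\coloneqq |\Gamma|$ is the rank of $B$. The strategy is to show that if every dilate $B_\rho$ with $\rho\in[t/2,t]$ were irregular, the resulting forced growth of $f$ would violate this bound.

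For the local step: if $B_\rho$ is irregular, then there exists $\delta_\rho$ with $0<|\delta_\rho|\leq 1/(100r)$ for which $|B_{(1+\delta_\rho)\rho}|$ lies outside $\bigl[(1-100r|\delta_\rho|)|B_\rho|,(1+100r|\delta_\rho|)|B_\rho|\bigr]$. Since $f$ is monotone, the direction of the failure is forced: for $\delta_\rho>0$ the upper estimate fails, and for $\delta_\rho<0$ the lower estimate fails. Using the elementary inequalities $\log(1+x)\geq x/2$ and $-\log(1-x)\geq x$ on $[0,1]$, this translates in either case into the statement that across the symmetric interval $I_\rho\coloneqq [(1-|\delta_\rho|)\rho,(1+|\delta_\rho|)\rho]$, the function $f$ jumps by at least $50r|\delta_\rho|$.

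For the global step, suppose for contradiction that $B_\rho$ is irregular for every $\rho\in[t/2,t]$. Then the family $\{I_\rho\}$ covers $[t/2,t]$, so the Vitali covering lemma on $\mathbf{R}$ extracts a disjoint subfamily $I_{\rho_1},\dots,I_{\rho_k}$ whose threefold dilations still cover $[t/2,t]$, forcing $\sum_j |I_{\rho_j}|\geq t/6$. Since $|I_{\rho_j}|=2|\delta_{\rho_j}|\rho_j\leq 2|\delta_{\rho_j}|t$, this yields $\sum_j|\delta_{\rho_j}|\geq 1/12$, so the total jump across the $I_{\rho_j}$ is at least $50r/12=25r/6$. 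On the other hand, disjointness and ordering of the $I_{\rho_j}$ inside $[(1-\tfrac{1}{100r})\tfrac{t}{2},(1+\tfrac{1}{100r})t]$ let us telescope the sum of jumps to at most $f\bigl((1+\tfrac{1}{100r})t\bigr)-f\bigl((1-\tfrac{1}{100r})\tfrac{t}{2}\bigr)$; since the ratio of dilation parameters here is at most $3$, two applications of Lemma~\ref{lem:bohrsum} bound this quantity by $2r\log 4$. As $25/6>2\log 4$, we obtain the desired contradiction.

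The main obstacle is this bookkeeping of constants: the large value $100$ in the definition of regularity is calibrated so that the forced jumps summed across irregular scales strictly outweigh the a priori doubling bound on $f$. A substantially weaker constant would not yield a contradiction by this direct covering argument, and would instead require either a stronger form of Lemma~\ref{lem:bohrsum} or a more refined analysis of the variation of $\rho\mapsto|B_\rho|$.
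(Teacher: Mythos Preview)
The paper does not supply a proof of this lemma; it is stated as one of several standard facts about Bohr sets, attributed to \textcite{Bourgain1999}. Your argument is correct and is essentially the classical proof: the monotone function $\rho\mapsto\log|B_\rho|$ has total increase over $[t/2,t]$ bounded by the doubling estimate of Lemma~\ref{lem:bohrsum}, whereas irregularity at every scale would force, via a Vitali covering, more growth than this bound permits. The numerical check $25/6>2\log 4$ goes through as you say, and the only step you leave implicit---passing from the uncountable cover $\{I_\rho\}_{\rho\in[t/2,t]}$ to a finite one before invoking Vitali---is immediate from compactness, since each $I_\rho$ is a genuine open interval about $\rho$ (the witnessing $\delta_\rho$ is necessarily nonzero).
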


Finally, Bohr sets do indeed have constant density (depending on the width), both in the
ambient group and in Bohr supersets.
\begin{lemm}
  If $\nu'\leq\nu$, we have
  \begin{equation*}
    |\Bohr(\Gamma,\nu')|\geq\left(\prod_{\gamma\in\Gamma}\frac{\nu'(\gamma)}{4\nu(\gamma)}\right)|\Bohr(\Gamma,\nu)|.
  \end{equation*}
\end{lemm}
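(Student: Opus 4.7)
The plan is to embed $\Bohr(\Gamma,\nu)$ into a product of short arcs on the unit circle via the characters in $\Gamma$, and then apply a box-covering pigeonhole argument to extract many elements of $\Bohr(\Gamma,\nu)$ whose pairwise differences are forced to lie in $\Bohr(\Gamma,\nu')$.

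First I would consider the map $\Phi\colon G\to\mathbf{C}^{|\Gamma|}$ defined by $\Phi(x)\coloneqq(\gamma(x))_{\gamma\in\Gamma}$. By the definition of the Bohr set, for $x\in\Bohr(\Gamma,\nu)$ each coordinate $\gamma(x)$ of $\Phi(x)$ lies in the arc $A_\gamma\coloneqq\{z\in\mathbf{C}:|z|=1,\ |z-1|\leq\nu(\gamma)\}$. Since $A_\gamma$ is one-dimensional, a direct arclength argument (based on $|e^{i\alpha}-e^{i\beta}|\leq|\alpha-\beta|$ and the fact that $A_\gamma$ has arclength at most $\pi\nu(\gamma)$) shows that it can be covered by at most $4\nu(\gamma)/\nu'(\gamma)$ sub-arcs each of chord diameter at most $\nu'(\gamma)$.

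Next, taking the product of these coverings over $\gamma\in\Gamma$ partitions the region containing $\Phi(\Bohr(\Gamma,\nu))$ into at most $\prod_{\gamma\in\Gamma}4\nu(\gamma)/\nu'(\gamma)$ product cells. By the pigeonhole principle, one such cell contains the images of at least
\[
  |\Bohr(\Gamma,\nu)|\cdot\prod_{\gamma\in\Gamma}\frac{\nu'(\gamma)}{4\nu(\gamma)}
\]
elements of $\Bohr(\Gamma,\nu)$, which I call $x_1,\dots,x_k$.

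Finally, for any $i,j\in\{1,\dots,k\}$ and any $\gamma\in\Gamma$, since $|\gamma(x_j)|=1$ and $\gamma(x_i),\gamma(x_j)$ lie in a common sub-arc of chord diameter at most $\nu'(\gamma)$,
\[
  |\gamma(x_i-x_j)-1|=\left|\frac{\gamma(x_i)}{\gamma(x_j)}-1\right|=|\gamma(x_i)-\gamma(x_j)|\leq\nu'(\gamma),
\]
so $x_i-x_j\in\Bohr(\Gamma,\nu')$. The $k$ differences $x_i-x_1$ are then $k$ distinct elements of $\Bohr(\Gamma,\nu')$, which yields the claimed lower bound. The only technical point I would have to check carefully is that the constant $4$ actually suffices in the covering count; this reduces to a one-dimensional arclength estimate and presents no serious obstacle.
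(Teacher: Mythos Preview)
The paper does not actually prove this lemma: it is listed among ``standard facts'' about Bohr sets with a pointer to \textcite{Bourgain1999} and \textcite[Chapter~4]{TaoVu2006}, and no argument is given in the text. Your covering-plus-pigeonhole argument is exactly the standard proof one finds in those references, and it is correct as outlined.

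On the one point you flagged: the constant $4$ does suffice. The arc $A_\gamma$ has arclength $4\arcsin(\nu(\gamma)/2)\leq\pi\nu(\gamma)$, while any sub-arc of chord diameter $\nu'(\gamma)$ has arclength $2\arcsin(\nu'(\gamma)/2)\geq\nu'(\gamma)$, so you need at most $\lceil \pi\nu(\gamma)/\nu'(\gamma)\rceil$ sub-arcs. Writing $t=\nu(\gamma)/\nu'(\gamma)\geq 1$, one checks $\lceil\pi t\rceil\leq 4t$: for $t<4/\pi$ this is $\lceil\pi t\rceil\leq 4\leq 4t$, and for $t\geq 4/\pi>1/(4-\pi)$ it follows from $\lceil\pi t\rceil\leq\pi t+1\leq 4t$.
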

This implies, in particular, that $|B_\rho|\geq(\rho/4)^{\rk{B}}|B|$ for any Bohr set $B$
and dilation factor $\rho<1$.

With the introduction of Bohr sets, we have now reached the point in this exposition where
the arguments discussed are far too technical for it to be appropriate to give anything
close to full proofs. We will instead mostly highlight the key ideas, and include some
representative arguments.

So, suppose that $N$ is an odd positive integer, $B\subset\mathbf{Z}/N\mathbf{Z}$ is a regular Bohr
set, and $A\subset B$ contains no nontrivial three-term arithmetic progressions. Let
$\rho>0$ with
\begin{equation*}
  \frac{1}{800r}<\rho<\frac{1}{400r}
\end{equation*}
be such that $B_{\rho}$ is regular, and set $f_A\coloneqq 1_A-\alpha 1_B$. Then
\begin{equation*}
  \sum_{\substack{x\in \mathbf{Z}/N\mathbf{Z} \\ y\in
      B_{\rho}}}1_A(x)1_A(x+y)1_A(x+2y)=\alpha N,
\end{equation*}
while the left-hand side above can be written as
\begin{equation*}
  \alpha^3\sum_{\substack{x\in \mathbf{Z}/N\mathbf{Z} \\ y\in B_{\rho}}}1_B(x)1_B(x+y)1_B(x+2y)
\end{equation*}
plus some sums involving $f_A$. To count the number of three-term arithmetic progressions
in $B$ with common difference in $B_\rho$, note that if $x\in B_{(1-2\rho)}$, then $x+y$ and
$x+2y$ both lie in $B$ whenever $y\in B_\rho$, so that
\begin{equation*}
  \sum_{\substack{x\in \mathbf{Z}/N\mathbf{Z} \\ y\in
      B_{\rho}}}1_B(x)1_B(x+y)1_B(x+2y)\geq\sum_{z\in\mathbf{Z}/N\mathbf{Z}}1_B(z)[1_{B_{(1-2\rho)}}*1_{B_{\rho}}](z).
\end{equation*}
By the regularity of $B$, the convolution $1_{B_{(1-2\rho)}}*1_{B_\rho}$ is very close to
$|B_\rho|$ times the indicator function of $B$. Indeed, the regularity of $B$ implies that
\begin{equation*}
  \sum_{y\in\mathbf{Z}/N\mathbf{Z}}|1_{B}(y)-1_{B_{(1-2\rho)}}(y-w)|\leq 200r\rho|B|
\end{equation*}
for every $w\in B_\rho$, so that
\begin{align*}
  \sum_{z\in\mathbf{Z}/N\mathbf{Z}}\left|[1_{B_{(1-2\rho)}}*1_{B_{\rho}}](z)-|B_\rho|1_{B}(z)\right|
  &=
\!\!\!\!    \sum_{z\in\mathbf{Z}/N\mathbf{Z}}\left|\sum_{x\in\mathbf{Z}/N\mathbf{Z}}\left(1_{B_{(1-2\rho)}}(z-x)-1_{B}(z)\right)1_{B_{\rho}}(x)\right|
  \\
  &\leq
    \sum_{x\in\mathbf{Z}/N\mathbf{Z}}\sum_{z\in\mathbf{Z}/N\mathbf{Z}}\left|1_{B_{(1-2\rho)}}(z-x)-1_{B}(z)\right|1_{B_{\rho}}(x)
  \\
  &\leq
    200r\rho|B||B_\rho|\leq \frac{1}{2} N|B_{\rho}|.
\end{align*}
Thus,
\begin{equation*}
\left|\sum_{z\in\mathbf{Z}/N\mathbf{Z}}1_B(z)[1_{B_{(1-2\rho)}}*1_{B_{\rho}}](z)-N|B_\rho|\right|\leq \frac{N}{2}|B_\rho|,
\end{equation*}
from which it follows that
\begin{equation*}
  \alpha^3\sum_{\substack{x\in \mathbf{Z}/N\mathbf{Z} \\ y\in B_{\rho}}}1_B(x)1_B(x+y)1_B(x+2y)\geq\frac{\alpha^3}{2}N|B_\rho|.
\end{equation*}

As a consequence, one of the sums involving~$f_A$ must have absolute value
$\gg\alpha^3N|B_\rho|$ when $N$~is sufficiently large. The cost of being able to count the
number of three-term arithmetic progressions in~$B$ is that now the range of~$y$ in these
sums is restricted to~$B_\rho$, so we do not have the Fourier
representation~\eqref{eq:fourier} that was crucial in our previous arguments. To proceed,
the idea is to insert extra averaging in the $x$~variable with the goal of localizing~$x$
to a (translate of) an even smaller regular dilate of~$B$, and approximate the sums using
the regularity of the various Bohr sets floating around until the restriction that $y$~lies in a Bohr set is transformed into the restriction that $x,x+y,x+2y$ all lie in a Bohr
set, while $y$~is allowed to freely range. Then the formula~\eqref{eq:fourier} can be
applied, yielding the following density-increment result.
\begin{theo}
  \label{thm:Bourgain1inc}
  Let $N$ be an odd positive integer, $B=\Bohr(\Gamma,\nu)$ be a regular Bohr set, and $A$
  be a subset of $B$ of density $\alpha$ containing no three-term arithmetic
  progressions. Then either
  \begin{equation}
    \label{eq:Bohrupper}
    N\ll \left(\frac{\rk{B}}{\alpha}\right)^{O(\rk{B})}\prod_{\gamma\in\Gamma}\nu(\gamma)^{-1},
  \end{equation}
  or else there exists a regular Bohr set $B'\subset B$ of width $\nu'$ satisfying
  \begin{enumerate}
  \item $\rk{B'}\leq\rk{B}+1$ and
  \item $\nu'\gg\left(\frac{\alpha}{\rk{B}}\right)^{O(1)}\nu$
  \end{enumerate}
  on some translate of which $A$ has density at least $\alpha+\Omega(\alpha^2)$.
\end{theo}
Starting with a subset $A$ of $\mathbf{Z}/N\mathbf{Z}$ of density $\alpha$ containing no
nontrivial three-term arithmetic progressions and running a density-increment iteration
then produces an inequality of the form
\begin{equation}
  \label{eq:Bohrdinc}
  \alpha^{C/\alpha^2}N\leq C'
\end{equation}
for some absolute constants $C,C'>0$, from which the bound
$r_3(N)\ll\frac{N}{(\log{N})^{1/2-o(1)}}$ of \textcite{Bourgain1999} follows. The width of
the Bohr set shrinking by a factor of $(\alpha/\rk{B})^{O(1)}$ at each step of the iteration is
responsible for the exponent of $1/2$ on $\log{N}$. If the width stayed constant, as is
the case in the finite field model setting, we would have obtained a bound of the form
$r_3(N)\ll\frac{N}{(\log{N})^{1-o(1)}}$.

All quantitative improvements to Bourgain's bound have also been obtained by running a
density-increment argument relative to Bohr sets, so we will introduce (a simplification,
suitable for our expository purposes, of) a piece of notation,
from \textcite{BloomSisask2020}, that succinctly summarizes the strength of a
density-increment on a Bohr set. This notation will provide a useful way of comparing the
efficiency of different density-increment results.
\begin{defi}
  Let $B$ be a regular Bohr set of rank $r$, and $A\subset B$ have density $\alpha$ in
  $B$. We say that $A$ has a density-increment of strength $[\delta,r';C]$ relative to $B$
  if there exists a regular Bohr set $B'\subset B$ of rank
  \begin{equation*}
    \rk(B')\leq r+Cr'
  \end{equation*}
  and size
  \begin{equation*}
    |B'|\geq (2r(r'+1))^{-C(r+r')}|B|
  \end{equation*}
  for which $A$ has increased density at least
  \begin{equation*}
    \left(1+\frac{\delta}{C}\right)\alpha
  \end{equation*}
  on some translate of $B'$.
\end{defi}
For example, Theorem~\ref{thm:Bourgain1inc} says that $A$ has a density-increment of
strength $[\alpha,1;\tilde{O}_\alpha(1)]$ relative to $B$.

\subsection{The dimension of the large spectrum}

The sets of frequencies at which $\widehat{1_A}$ is large, which we considered in the proof
of Theorem~\ref{thm:HBSinc}, are called the \emph{large spectra} of $A$.
\begin{defi}
  Let $G$ be an abelian group, $A\subset G$ be a subset of density $\alpha$, and
  $\delta>0$. The $\delta$-large spectrum of $A$ is the set
  \begin{equation*}
    \Spec_\delta(A)\coloneqq \{\xi\in\widehat{G}\mid |\widehat{1_A}(\xi)|\geq\delta\alpha\}.
  \end{equation*}
\end{defi}
Note that $\lvert\Spec_\delta(A)\rvert\leq 1/(\alpha\delta^2)$ for all $\delta>0$ by Parseval's
identity.

Suppose that $A\subset\mathbf{Z}/N\mathbf{Z}$ has density $\alpha$ and contains no nontrivial three-term arithmetic
progressions, and set $f_A\coloneqq 1_A-\alpha$, so that, as in the proof of Theorem~\ref{thm:HBSinc},
$\|\widehat{f_A}\|_{\ell^3}^3\gg\alpha^3$. By dyadic pigeonholing, there exists some $1\geq\delta\gg\alpha$ such that
\begin{equation}
  \label{eq:largel3}
  \sum_{\xi\in\Spec_{\delta}(A)\setminus\Spec_{2\delta}(A)}|\widehat{f_A}(\xi)|^3\gtrsim_\alpha \alpha^3.
\end{equation}
For the benefit of the reader who has not seen dyadic pigeonholing, which is a common
argument in additive combinatorics, this is obtained by noting that Fourier coefficients
of size $\ll\alpha^2$ contribute $\ll\alpha^3$ to $\|\widehat{f_A}\|^3_{\ell^3}$ by
Parseval's identity, and then
decomposing the remaining frequencies into dyadic blocks
$\{\xi\in G\mid 2^i\alpha<|\widehat{f_A}(\xi)|\leq 2^{i+1}\alpha\}$ (of which there are
$\tilde{O}_\alpha(1)$) and applying the pigeonhole principle.

Note that if~\eqref{eq:largel3} holds, then we must have
$\lvert\Spec_{\delta}(A)\rvert\gtrsim_\alpha\delta^{-3}$, as well as that
\begin{equation*}
  2\delta\alpha\sum_{\xi\in\Spec_{\delta}(A)}|\widehat{f_A}(\xi)|^2\geq\sum_{\xi\in\Spec_{\delta}(A)\setminus\Spec_{2\delta}(A)}|\widehat{f_A}(\xi)|^3\gtrsim_\alpha \alpha^3,
\end{equation*}
so
\begin{equation}
\label{eq:largel2}
  \sum_{\xi\in\Spec_{\delta}(A)}|\widehat{f_A}(\xi)|^2\gtrsim_{\alpha}\frac{\alpha^2}{\delta}.
\end{equation}
One can now adapt the $\ell^2$-Fourier mass increment idea of Heath-Brown and Szemer\'edi
to the setting of Bohr sets to deduce a large density-increment for $A$ on a regular Bohr
set. Most papers on quantitative bounds in Roth's theorem posterior to \textcite{Bourgain1999}
contain a variant of the following standard lemma, which is essentially \textcite[Lemma~7.2]{Sanders2012}.
\begin{lemm}
  \label{lem:L2Bohr}
  Let $B$ be a regular Bohr set of rank $r$, $A\subset B$ have density $\alpha$ in $B$,
  $f_A\coloneqq 1_A-\alpha 1_B$, $K>0$ be a parameter, and $\Gamma\subset\mathbf{Z}/N\mathbf{Z}$ be a
  set of frequencies for which
  \begin{equation*}
    \sum_{\gamma\in \Gamma}|\widehat{f_A}(\gamma)|^2\geq K\alpha^2\mu(B).
  \end{equation*}
  Suppose that $B'\subset B_\rho$, where $\rho\ll\alpha K/r$, is a Bohr set of rank $r'$
  such that
  \begin{equation}
    \label{eq:gammasubset}
    \Gamma\subset\left\{\gamma\in\mathbf{Z}/N\mathbf{Z}\mid |1-\gamma(x)|\leq\frac{1}{2}\text{
      for all }x\in B'\right\}.
  \end{equation}
  Then there exists a regular Bohr set $B''$ satisfying
  \begin{enumerate}
  \item $\rk(B'')=r'$ and
  \item $\mu(B'')\geq 2^{-O(r')}\mu(B)$
  \end{enumerate}
  such that $A$ has density at least $\alpha(1+\Omega(K))$ on some translate of $B''$.
\end{lemm}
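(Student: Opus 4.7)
My plan is to carry out the Heath-Brown--Szemer\'edi $\ell^2$-mass-increment strategy in the Bohr set setting. The spectrum-like hypothesis $|1-\gamma(x)|\leq 1/2$ for $\gamma\in\Gamma$ and $x\in B'$ says exactly that the characters in $\Gamma$ are nearly constant on $B'$, which will let the hypothesized Fourier mass on $\Gamma$ transfer to the $L^2$-norm of the convolution $f_A*\mu_{B'}$; that $L^2$-norm records the variation of the density of $A$ across $B'$-translates, and so will yield a genuine density increment.

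First, the triangle inequality immediately gives $|\widehat{\mu_{B'}}(\gamma)|\geq 1/2$ for every $\gamma\in\Gamma$. Combined with Plancherel and the convolution identity $\widehat{f_A*\mu_{B'}}=\widehat{f_A}\,\widehat{\mu_{B'}}$, this yields
\begin{equation*}
  \mathbf{E}_x|f_A*\mu_{B'}(x)|^2=\sum_{\gamma}|\widehat{f_A}(\gamma)|^2|\widehat{\mu_{B'}}(\gamma)|^2\geq\frac{1}{4}\sum_{\gamma\in\Gamma}|\widehat{f_A}(\gamma)|^2\geq\frac{K\alpha^2\mu(B)}{4}.
\end{equation*}
Setting $F\coloneqq f_A*\mu_{B'}=1_A*\mu_{B'}-\alpha\cdot 1_B*\mu_{B'}$, one has $\mathbf{E}_xF=0$ and $|F|\leq 1_B*\mu_{B'}\leq 1$, so the standard positive-part inequality $\mathbf{E}_xF^+(x)\geq\|F\|_{L^2}^2/(2\|F\|_\infty)$ gives $\mathbf{E}_xF^+(x)\gg K\alpha^2\mu(B)$.

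To convert this into a pointwise density increment, I would localize using the regularity of $B$: since $B'\subset B_\rho$, one has $(1_B*\mu_{B'})(x)=1$ on $B_{(1-\rho)}$ and $(1_B*\mu_{B'})(x)=0$ outside $B_{(1+\rho)}$. Thus on $B_{(1-\rho)}$ the identity $F(x)=(1_A*\mu_{B'})(x)-\alpha$ means each positive value of $F$ directly records the surplus density of $A$ on the translate $x+B'$. Meanwhile, on the shell $B_{(1+\rho)}\setminus B_{(1-\rho)}$, the improved pointwise bound $F^+\leq 1_B*\mu_{B'}$ together with the regularity estimate $\int_{\text{shell}}(1_B*\mu_{B'})\leq 100r\rho\,\mu(B)$ makes the shell contribution to $\mathbf{E}_xF^+$ negligible under the hypothesis $\rho\ll\alpha K/r$. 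Pigeonholing then yields $x^*\in B_{(1-\rho)}$ with $(1_A*\mu_{B'})(x^*)\geq\alpha(1+\Omega(K))$, i.e., density of $A$ at least $\alpha(1+\Omega(K))$ on the translate $x^*+B'$.

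Finally, to produce the claimed regular Bohr set $B''$ I would replace $B'$ with a regular dilate $B'_\tau$, $\tau\in[1/2,1]$, chosen by the regularity-producing lemma quoted in the excerpt; the doubling bound $|B'_{2\nu}|\leq 4^{r'}|B'_\nu|$ controls the size loss and $\rk(B'')=r'$ is preserved, while a further regularity argument verifies that the density increment on $x^*+B'$ is transported (up to a constant factor) to a translate of $B''$. The main obstacle is the delicate shell-localization bookkeeping, and above all justifying the claimed $\mu(B'')\geq 2^{-O(r')}\mu(B)$ (rather than only $\geq 2^{-O(r')}\mu(B')$), which requires relating $\mu(B')$ to $\mu(B)$ using the structural coupling between $\Gamma$ and $B'$ imposed by the spectrum condition rather than the trivial containment $B'\subset B_\rho$.
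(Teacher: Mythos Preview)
The paper does not prove this lemma, citing it instead as a standard result essentially from \textcite{Sanders2012}. Your overall strategy---Plancherel to convert the $\ell^2$-mass on $\Gamma$ into an $L^2$ lower bound for $F=f_A*\mu_{B'}$, then extract a density increment by a positivity/pigeonhole argument---is the right one. There is, however, a genuine gap in your extraction step. From $\mathbf{E}_x F^+\gg K\alpha^2\mu(B)$, pigeonholing over the support (which has measure $\asymp\mu(B)$) only produces a point where $F\gg K\alpha^2$, not $F\gg K\alpha$; the latter is what you need for the claimed increment $\alpha(1+\Omega(K))$. Relatedly, your shell estimate gives a contribution of size $\ll r\rho\,\mu(B)\ll\alpha K\mu(B)$, which does \emph{not} beat $K\alpha^2\mu(B)$.

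The missing ingredient is the one-sided bound $F\geq-\alpha$ (since $f_A\geq-\alpha 1_B$). One clean way to use it, close to the Heath-Brown--Szemer\'edi computation the paper does present, is to work with $g\coloneqq 1_A*\mu_{B'}$ directly: write $g=F+\alpha(1_B*\mu_{B'})$ and expand $\|g\|_{L^2}^2$. The cross term $2\alpha\langle F,1_B*\mu_{B'}\rangle$ is $O(\alpha r\rho\,\mu(B))$ by regularity, so under precisely the stated hypothesis $\rho\ll\alpha K/r$ one gets $\|g\|_{L^2}^2\geq\alpha^2\mu(B)(1+\Omega(K))$, and then
\[
\|g\|_{L^\infty}\geq\frac{\|g\|_{L^2}^2}{\|g\|_{L^1}}=\frac{\|g\|_{L^2}^2}{\alpha\mu(B)}\geq\alpha(1+\Omega(K)),
\]
which is the desired increment on some translate of $B'$, with no shell analysis required.

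Your final worry is legitimate: the argument yields only $\mu(B'')\geq 2^{-O(r')}\mu(B')$, and nothing in the hypotheses bounds $\mu(B')$ below in terms of $\mu(B)$. This appears to be an imprecision in the survey's statement of the lemma; in every application $B'$ is built explicitly from $B$ by enlarging the frequency set and dilating, and its size relative to $|B|$ is then tracked separately through the density-increment-strength bookkeeping.
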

The efficiency of this density-increment result directly depends on how small we can take
$r'$ to be. So, given $\Gamma$, we want to find a Bohr set of rank as small as possible
for which~\eqref{eq:gammasubset} holds. When $\Gamma$ is an arbitrary set, the best we can
do is $\rk{B'}=|\Gamma|$. But, in our situation, $\Gamma=\Spec_\delta(A)$. Another key insight of \textcite{Bourgain2008} was that, because large spectra are
highly additively structured, one can do much better for them than the trivial bound
$\rk{B'}\leq\lvert\Spec_\delta(A)\rvert$.

There are multiple senses in which the large spectrum possesses additive structure, but the
relevant one for this section is that the large spectrum has small dimension, a result due
to \textcite{Chang2002}.
\begin{defi}
  Let $G$ be an abelian group. A subset $S\subset G$ is said to be \emph{dissociated} if
  $\sum_{s\in S}\epsilon_ss=0$ for $\epsilon_s\in\{-1,0,1\}$ only when $\epsilon_s=0$ for
  all $s\in S$. The \emph{dimension} of a set in $G$ is the size of its largest
  dissociated subset.
\end{defi}
\begin{lemm}[\cite{Chang2002}]
  \label{lem:chang}
  Let $A\subset \mathbf{Z}/N\mathbf{Z}$ be a subset of density $\alpha$, and
  $\delta>0$. Then $\dim\Spec_{\delta}(A)\lesssim_\alpha1/\delta^2$.
\end{lemm}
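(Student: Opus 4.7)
The plan is to let $\Lambda=\{\lambda_1,\dots,\lambda_d\}\subset\Spec_\delta(A)$ be a maximum-size dissociated subset, so that $d=\dim\Spec_\delta(A)$, and to bound $d\ll\log(1/\alpha)\delta^{-2}$ by testing $1_A$ against a phase-aligned trigonometric polynomial supported on $\Lambda$. Concretely, I would set
\[
  f(x)\coloneqq\sum_{\lambda\in\Lambda}c_\lambda\lambda(x),\qquad c_\lambda\coloneqq\frac{\widehat{1_A}(\lambda)}{|\widehat{1_A}(\lambda)|},
\]
so that each $|c_\lambda|=1$, and then Plancherel together with the definition of the large spectrum immediately gives
\[
  \mathbf{E}_x 1_A(x)\overline{f(x)}=\sum_{\lambda\in\Lambda}|\widehat{1_A}(\lambda)|\geq d\delta\alpha.
\]
This is a lower bound on the $L^2$ pairing of $1_A$ and $f$ that is linear in $d$.

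To extract an upper bound on $d$, I would invoke \emph{Rudin's inequality} for dissociated sets: for every positive integer $k$,
\[
  \|f\|_{L^{2k}}\leq C\sqrt{k}\,\|(c_\lambda)\|_{\ell^2}=C\sqrt{kd}.
\]
Combined with H\"older applied to the conjugate pair $(L^{2k},L^{2k/(2k-1)})$ and the identity $\|1_A\|_{L^{2k/(2k-1)}}=\alpha^{1-1/(2k)}$, the pairing estimate becomes
\[
  d\delta\alpha\leq\alpha^{1-1/(2k)}\cdot C\sqrt{kd},
\]
which after squaring and rearranging yields $d\leq C^2 k\delta^{-2}\alpha^{-1/k}$. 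Optimising by taking $k=\lceil\log(1/\alpha)\rceil$ makes $\alpha^{-1/k}\leq e$, producing $d\ll\log(1/\alpha)\delta^{-2}$ and hence the stated $\dim\Spec_\delta(A)\lesssim_\alpha\delta^{-2}$.

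The main obstacle is Rudin's inequality itself, which is a classical fact in harmonic analysis. The usual route expands $\|f\|_{L^{2k}}^{2k}=\mathbf{E}_x|f(x)|^{2k}$ as a sum over tuples in $\Lambda^{2k}$, groups terms by the resulting product character $\lambda_{i_1}\cdots\lambda_{i_k}\overline{\lambda_{j_1}\cdots\lambda_{j_k}}$, and uses dissociativity to show that the combinatorics of collisions matches that of a Rademacher sum with the same $\ell^2$-norm; Khintchine's inequality for such sums then yields the $C\sqrt{k}$ factor. Equivalently, one can randomise the coefficients $c_\lambda$ with independent Rademacher signs and transfer the resulting Khintchine bound back to $f$. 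Once Rudin's inequality is in hand, the rest of the argument above is only a few lines.
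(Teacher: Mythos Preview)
Your argument is correct and is, in fact, the standard proof of Chang's lemma via Rudin's inequality and a H\"older/Khintchine optimisation; the paper itself does not supply a proof, merely citing \textcite{Chang2002}. One small point worth making explicit is that the $\{-1,0,1\}$-dissociativity hypothesis in the paper's definition is indeed enough to run the Rudin inequality at exponent $2k$---the standard trick (expanding $\prod_{\lambda}(1+\cos(2\pi\theta_\lambda))$ or, equivalently, the Riesz product argument) only ever requires control of $\pm1$ coefficients, not coefficients up to $\pm k$---so there is no hidden gap there.
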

Lemma~\ref{lem:chang} was first used by Chang to improve the best known bounds in the
Freiman--Ruzsa theorem, and has since found many applications in additive combinatorics
and theoretical computer science. The bound
$\dim\Spec_{\delta}(A)\lesssim_\alpha1/\delta^2$ (which \textcite{Green2003} showed is sharp) should be
compared with the bound $\lvert\Spec_\delta(A)\rvert\leq 1/(\alpha\delta^2)$ from above, so that,
when $\alpha$ is small, the dimension of the large spectrum is much smaller than
its cardinality.

One can find (at the cost of shrinking $\rho$ by a factor of $(\alpha/r)^{O(1)}$, which is
not an issue) $B'$ as in Lemma~\ref{lem:L2Bohr} of rank $\ll\dim\Spec_{\delta}(A)$,
illustrating a direct connection between the additive structure of large spectra and
efficiency of density-increments. Combining this with Chang's lemma produces a
density-increment of strength $[1,1/\alpha^2;\tilde{O}_\alpha(1)]$ when
$B=\mathbf{Z}/N\mathbf{Z}$. To obtain such a density-increment when $B$ is any regular
Bohr set, one needs to work with notions of dissociativity and dimension defined relative
to Bohr sets, as well as prove a relative version of Chang's theorem. See, for example,
\textcite{Sanders2012} for more on this important technical detail. Running a
density-increment iteration then recovers the bound for $r_3(N)$ from
\textcite{Bourgain1999} up to an extra power of $\log\log{N}$.

Though the bound on dimension in Chang's lemma is sharp, \textcite{Bloom2016} proved that
one can obtain a better bound by passing to a positive density subset of the large
spectrum.
\begin{lemm}[\cite{Bloom2016}]
  \label{lem:bloom}
  Let $A\subset \mathbf{Z}/N\mathbf{Z}$ be a subset of density $\alpha$, and
  let {$\delta>0$.} Then there exists a subset $S\subset\Spec_\delta(A)$ satisfying
  $|S|\gg\delta\lvert\Spec_\delta(A)\rvert$ for which $\dim S\lesssim_\alpha1/\delta$.
\end{lemm}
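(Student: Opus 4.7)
The plan is to refine the standard Rudin-based proof of Chang's Lemma~\ref{lem:chang} so as to save a factor of~$\delta^{-1}$ in the dimension bound at the cost of passing to a subset of~$\Spec_\delta(A)$ of density $\gg\delta$.

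Recall that Chang's lemma follows from Rudin's inequality
\[
\Bigl\|\sum_{\gamma\in\Lambda}c_\gamma\gamma\Bigr\|_{L^p(\mathbf{Z}/N\mathbf{Z})}\ll\sqrt{p}\Bigl(\sum_{\gamma\in\Lambda}|c_\gamma|^2\Bigr)^{1/2},
\]
valid for any dissociated $\Lambda$ and $p\geq 2$, by choosing signs $\epsilon_\gamma$ with $\epsilon_\gamma\widehat{1_A}(\gamma)=|\widehat{1_A}(\gamma)|$ and applying H\"older to
\[
\delta\alpha|\Lambda|\leq\sum_{\gamma\in\Lambda}|\widehat{1_A}(\gamma)|=\Bigl\langle 1_A,\sum_{\gamma\in\Lambda}\epsilon_\gamma\gamma\Bigr\rangle\leq\alpha^{1/p'}\cdot O(\sqrt{p})|\Lambda|^{1/2},
\]
which, upon optimizing $p=\log(1/\alpha)$, yields $|\Lambda|\lesssim_\alpha\delta^{-2}$. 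The source of slack to exploit is that H\"older treats $1_A$ as a generic element of~$L^{p'}$, ignoring that many characters in $\Spec_\delta(A)$ correlate simultaneously with~$1_A$.

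The strategy I would pursue is a greedy stripping of dissociated subsets. Set $T_0\coloneqq\Spec_\delta(A)$, fix a dimension threshold $D\coloneqq C\log(1/\alpha)/\delta$, and iteratively: if $T_i$ contains a dissociated subset $\Lambda_i$ with $|\Lambda_i|>D$, set $T_{i+1}\coloneqq T_i\setminus\Lambda_i$ and continue; otherwise halt with $S\coloneqq T_i$. By construction every dissociated subset of~$S$ has size at most~$D$, so $\dim S\leq D\lesssim_\alpha1/\delta$, and the entire content of the lemma reduces to the termination estimate $|S|\geq c\delta|T_0|$.

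The main obstacle is this termination estimate, since a separate Rudin--H\"older bound on each~$\Lambda_i$ only forces the weaker $D\geq C\log(1/\alpha)/\delta^2$. To close the $\delta$-gap I would apply Rudin jointly to the disjoint union $\Lambda=\bigsqcup_i\Lambda_i$, writing the union as an $L^p$-sum of its dissociated pieces and handling the cross terms via Minkowski in~$L^p$ together with a Cauchy--Schwarz across~$i$. The essential new input beyond Chang's lemma, and the step that I expect to be the technical crux, is a sharper replacement of the crude bound $\|1_A\|_{L^{p'}}=\alpha^{1/p'}$ by an estimate reflecting the correlation between~$1_A$ and the combined character sum $\sum_i\Sigma_i$, where $\Sigma_i\coloneqq\sum_{\gamma\in\Lambda_i}\epsilon_\gamma\gamma$. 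Via a dyadic pigeonhole on the level sets of $|\sum_i\Sigma_i|$, in the spirit of the argument producing~\eqref{eq:largel2}, one can restrict $1_A$ to a sublevel set of the right density and thereby gain one extra factor of~$\delta^{1/p'}$. After optimizing $p=\log(1/\alpha)$, this forces $\sum_i|\Lambda_i|\leq(1-c\delta)|T_0|$, giving $|S|\geq c\delta|T_0|$ and completing the proof.
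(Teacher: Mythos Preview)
The paper does not itself prove this lemma---it is quoted from \textcite{Bloom2016}---but the surrounding text indicates the route: one first shows $E_{2m}(\Spec_\delta(A))\geq\alpha\delta^{2m}\lvert\Spec_\delta(A)\rvert^{2m}$ (derived in the subsection on higher energies just before Lemma~\ref{lem:simplifiedenergy}) and then invokes the energy-to-dimension dichotomy of Lemma~\ref{lem:simplifiedenergy} with $m\asymp\log(1/\alpha)$ and $\ell\asymp m/\delta$. Alternative~(2) there is excluded by the energy lower bound, and alternative~(1) produces the subset~$S$ with $|S|\gg(m/\ell)\lvert\Spec_\delta(A)\rvert\asymp\delta\lvert\Spec_\delta(A)\rvert$ and $\dim S\ll\ell\lesssim_\alpha 1/\delta$.

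Your greedy-stripping framework is sensible---something like it does underlie the proof of Lemma~\ref{lem:simplifiedenergy}---but the termination argument you sketch has a real gap. Minkowski plus Cauchy--Schwarz on $\sum_i\Sigma_i$ give at best $\bigl\|\sum_i\Sigma_i\bigr\|_{L^p}\ll\sqrt{p}\,k^{1/2}\bigl(\sum_i|\Lambda_i|\bigr)^{1/2}$, and pairing this against $\|1_A\|_{L^{p'}}=\alpha^{1/p'}$ yields only the Chang-strength statement $\sum_i|\Lambda_i|\lesssim_\alpha k/\delta^2$, which is vacuous once each $|\Lambda_i|>D$. The fix you propose---pigeonholing on level sets of $|\sum_i\Sigma_i|$ and restricting $1_A$ so as to replace $\alpha^{1/p'}$ by something $\delta^{1/p'}$ times smaller---does not work: any such restriction throws away precisely the correlation $\langle 1_A,\sum_i\Sigma_i\rangle\geq\delta\alpha\sum_i|\Lambda_i|$ on which the inequality is built, and there is no mechanism by which a level-set decomposition of the character sum lowers the $L^{p'}$ norm of $1_A$ itself. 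The genuine saving of a factor of~$\delta$ over Chang does not come from sharpening this H\"older--Rudin pairing; it comes from routing through the $2m$-th energy and playing the stripped dissociated sets against $E_{2m}$ on the physical side, which is exactly what Lemma~\ref{lem:simplifiedenergy} packages.
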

Bloom actually proved a version of this lemma relativized to Bohr sets which, combined
with Lemma~\ref{lem:L2Bohr}, produces a density-increment of strength
$[1,1/\alpha;\tilde{O}_\alpha(1)]$ relative to Bohr sets. Running a density-increment iteration then yields
$r_3(N)\ll\frac{N}{(\log{N})^{1-o(1)}}$.

\subsection{Almost-periodicity of convolutions}

\textcite{Sanders2011} was the first to prove a bound of the form
$r_3(N)\ll\frac{N}{(\log{N})^{1-o(1)}}$, and he did this not by further analysis of the
additive structure of large spectra, but by utilizing methods on the ``physical
side''. \textcite{CrootSisask2010} proved a variety of theorems saying, roughly, that
convolutions are approximately translation-invariant under a large set of shifts, and
called this phenomenon~\emph{almost-periodicity}. It is possible to take the set of
shifts to be a subspace, long arithmetic progression, or Bohr set, depending on the
ambient group or the desired application. One of these almost-periodicity results was a
key input into the work of \textcite{Sanders2011}, and \textcite{BloomSisask2019} later
gave a proof of the bound $r_3(N)\ll\frac{N}{(\log{N})^{1-o(1)}}$ almost completely
relying on almost-periodicity.

The rough structure of the argument in \textcite{BloomSisask2019} is to consider, for a
subset $A$ of $\mathbf{Z}/N\mathbf{Z}$ lacking nontrivial three-term arithmetic
progressions, the $L^{p}$-norm of the convolution $1_A*1_A$ for large $p$ (on the order of
$\log(1/\alpha)$), and then to deduce a density-increment in both the case when
$\|1_A*1_A\|_{L^p}$ is small and the case when $\|1_A*1_A\|_{L^p}$ is
large. \textcite{BloomSisask2020} required a more flexible version of this second part of
their earlier argument, which we record below. Recall that $g\circ h\coloneqq g*h_{-}$, where
$h_{-}(x)\coloneqq \overline{h(-x)}$.
\begin{lemm}[\cite{BloomSisask2020}, Lemma~5.10]
  \label{lem:convoinc}
  Let $K\geq 10$ be a parameter, $B\subset\mathbf{Z}/N\mathbf{Z}$ be a regular Bohr set of
  rank $r$, $A\subset B$ have density $\alpha\leq 1/K$, $\rho\ll \alpha^2 r$, and
  $B'\subset B_\rho$ be another Bohr set of rank $r$. If
  \begin{equation*}
    \|\mu_A\circ 1_A\|_{L^{2m}(\mu_{B'}\circ\mu_{B'})}\geq \alpha K,
  \end{equation*}
  then $A$ has a density-increment relative to $B'$ of strength $[K,\frac{1}{\alpha K};\tilde{O}_\alpha(m\alpha^{-O(1/m)})]$.
\end{lemm}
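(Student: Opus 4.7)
The plan is to combine the hypothesis with an $L^{2m}$-almost-periodicity argument in the style of Croot--Sisask, which was the main physical-side ingredient introduced in the Bloom--Sisask philosophy. First, by Fubini and the change of variables $t = y-z$, the hypothesis rewrites as
\[
    \mathbf{E}_{y,z \in G} \mu_{B'}(y)\mu_{B'}(z) (\mu_A \circ 1_A)(y - z)^{2m} \geq (\alpha K)^{2m},
\]
and pigeonholing in $y$ produces a translate $x_0 \in G$ such that the one-variable norm $\|\tau_{x_0}(\mu_A \circ 1_A)\|_{L^{2m}(\mu_{B'})} \geq \alpha K$. This reduces the problem to the one-variable statement that a shift of $\mu_A \circ 1_A$ is $L^{2m}$-large on the Bohr ball $B'$.

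Next, I would invoke the $L^{2m}$ almost-periodicity lemma of Croot--Sisask, applied to $1_A$ at scale $B'$ with absolute error tolerance $\varepsilon \asymp \alpha K$. Since $\|1_A\|_{L^{2m}} = \alpha^{1/(2m)}$, the standard Croot--Sisask output gives a symmetric set $T \subset G$ of density at least $\exp(-\tilde{O}_\alpha(m \alpha^{-O(1/m)}/K^2))$ such that $\|\tau_t(1_A * \mu_{B'}) - 1_A * \mu_{B'}\|_{L^{2m}} \leq \varepsilon$ for every $t \in T$. Because $\widehat{\mu_A \circ 1_A}(\xi) = |\widehat{1_A}(\xi)|^2/\alpha$, the approximate invariance of $1_A * \mu_{B'}$ transfers, via a second convolution with $\mu_{B'}$ and Minkowski's inequality, to an $L^{2m}$-approximate invariance of $\mu_A \circ 1_A$ itself under shifts in $T$.

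The third step is to pass from $T$ to a regular Bohr set $B''$ of controlled rank. For this, one applies the Bohr-relativized Chang/Bloom lemma to the large spectrum that governs the approximate periods produced above; the relevant spectrum has Chang dimension $\lesssim_\alpha 1/(\alpha K)$, and this yields a regular Bohr set $B'' \subset T$ of rank at most $\rk(B') + \tilde{O}_\alpha(1/(\alpha K))$ with the size bound $|B''| \geq (2r(1/(\alpha K)+1))^{-\tilde{O}_\alpha(r + 1/(\alpha K))}|B'|$. On translates of $B''$, the function $\mu_A \circ 1_A$ is $L^{2m}$-close to a constant, so the largeness at $x_0$ propagates to a pointwise bound $\mu_A \circ 1_A \gtrsim \alpha K$ on the full translate $x_0 + B''$. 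Writing $\mu_A \circ 1_A = \alpha^{-1}(1_A \circ 1_A)$ and applying Fubini converts this into the desired density increment $|A \cap (y + B'')|/|B''| \geq \alpha(1 + \Omega(K))$ for some $y$.

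The main obstacle is parameter calibration. The Croot--Sisask step produces a factor $\alpha^{-O(1/m)}$ in the density estimate for $T$, which accounts precisely for the $\alpha^{-O(1/m)}$ appearing in the claimed constant $C = \tilde{O}_\alpha(m\alpha^{-O(1/m)})$; the Chang-dimension bound then delivers the rank loss $1/(\alpha K)$ rather than the naive quadratic loss one would get from Bogolyubov--Ruzsa alone. Balancing the $L^{2m}$ exponent $m$ against these two ingredients, while ensuring that every intermediate Bohr set remains regular and of size comparable to $B'$, constitutes the technical heart of the argument; the smallness hypothesis on $\rho$ is what enables this calibration to go through.
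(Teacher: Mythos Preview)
Your outline is in the right spirit---this lemma is indeed proved via $L^p$-almost-periodicity---but you are reconstructing the almost-periodicity machinery from scratch rather than invoking it as a black box, and in doing so you take a longer route with some parameter bookkeeping that does not quite track. The paper's argument (presented there in the finite field model, with the Bohr-set version declared analogous) is much shorter: one applies a Croot--Sisask-type $L^{2m}$-almost-periodicity theorem that \emph{directly} outputs a subspace (respectively, Bohr set) $V$ of the correct codimension such that $\|\mu_A*1_A*\mu_V-\mu_A*1_A\|_{L^{2m}}$ is small; the reverse triangle inequality then gives $\|\mu_A*1_A*\mu_V\|_{L^{2m}}\gg\alpha K$, and the one-line chain
\[
\alpha K \ll \|\mu_A*1_A*\mu_V\|_{L^{2m}}\leq\|\mu_A*1_A*\mu_V\|_{L^\infty}\leq\|1_A*\mu_V\|_{L^\infty}
\]
(the last step because $\mu_A$ has mean $1$) is already the density increment. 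No initial pigeonholing in a translate, no separate Chang step, and no ``propagation then Fubini'' are needed.

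The part of your sketch most in need of repair is the third step. You assert that ``the large spectrum that governs the approximate periods'' has Chang dimension $\lesssim_\alpha 1/(\alpha K)$ and that this yields a Bohr set $B''$ with rank at most $\rk(B')+\tilde{O}_\alpha(1/(\alpha K))$. But it is unclear which function's spectrum you mean, and the raw Croot--Sisask period set $T$ does not come packaged with a spectrum of that dimension; finding a Bohr set inside $T$ via Bogolyubov--Ruzsa gives a rank bound in terms of the density of $T$, not directly in terms of $\alpha K$. Moreover, even the target rank increase is $C\cdot r'=\tilde{O}_\alpha(m\alpha^{-O(1/m)})\cdot\tfrac{1}{\alpha K}$, so your claimed $\tilde{O}_\alpha(1/(\alpha K))$ drops the factor $m\alpha^{-O(1/m)}$. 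Getting the correct exponents requires the refined almost-periodicity statement whose error is $\varepsilon\|\mu_A*1_A\|_{L^m}^{1/2}$ with $\varepsilon\asymp(\alpha K)^{1/2}$---in effect you are re-proving that theorem inside your argument. Citing it as a package and following it with the $L^\infty$ inequality above is both shorter and avoids these issues.
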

Note that this result does not require $A$ to lack three-term arithmetic
progressions--that hypothesis is only used in \textcite{BloomSisask2019} when
$\|1_A*1_A\|_{L^{2m}}$ is small. The proof of Lemma~\ref{lem:convoinc} is
short, and utilizes an $L^p$-almost-periodicity result relative to Bohr sets. But the
proof is even shorter in the finite field model setting, and the relevant $L^p$-almost
periodicity result quicker to state, so we will instead present the model proof, which
also appears in \textcite[Section~3]{BloomSisask2019}.

\begin{theo}[\cite{BloomSisask2019}, Theorem~3.2]
  \label{thm:almostp}
Let $p\geq 2$, $0<\varepsilon<1$, and $A\subset\mathbf{F}_3^n$ have density~$\alpha$. Then there
exists a subspace $V\leq\mathbf{F}_3^n$ of codimension
$\lesssim_{\varepsilon,\alpha}p/\varepsilon^2$ such that
\begin{equation*}
  \|\mu_A*1_A*\mu_V-\mu_A*1_A\|_{L^p}\leq\varepsilon\|\mu_A*1_A\|_{L^{p/2}}^{1/2}+\varepsilon^2.
  \end{equation*}
\end{theo}
The following lemma is a finite field model analogue of Lemma~\ref{lem:convoinc} for
bounded $K$.
\begin{lemm}[\cite{BloomSisask2019}, Lemma~3.4]
  Let $A\subset\mathbf{F}_3^n$ have density~$\alpha$ and $m$ be a natural number. If
  $\|\mu_A*1_A\|_{L^{2m}}\geq 10\alpha$, then there exists a subspace of codimension
  $\lesssim_{\alpha}m/\alpha$ such that $A$ has density at least $5\alpha$ on some
  translate of $V$.
\end{lemm}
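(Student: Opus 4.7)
The plan is to apply the $L^p$-almost-periodicity result Theorem~\ref{thm:almostp} to the function $f\coloneqq \mu_A*1_A$ with exponent $p=2m$ and a carefully chosen $\varepsilon$, and then convert the resulting $L^{2m}$ estimate on $f*\mu_V$ into a pointwise estimate on $1_A*\mu_V$. Since $f*\mu_V=\mu_A*(1_A*\mu_V)$ and $\mu_A$ is a probability measure (nonnegative with $\mathbf{E}\mu_A=1$), Young's inequality gives $\|f*\mu_V\|_{L^{2m}}\leq \|1_A*\mu_V\|_{L^{2m}}$. The auxiliary function $g\coloneqq 1_A*\mu_V$ is constant on cosets of $V$ with value $g(x)=|A\cap(x+V)|/|V|$, so any pointwise lower bound on $g$ produces a translate of $V$ on which $A$ is dense.

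The right choice is $\varepsilon\coloneqq \|f\|_{L^{2m}}^{1/2}/10$. Since $f$ is pointwise bounded by $1$, we have $\|f\|_{L^{2m}}\leq 1$ and hence $\varepsilon\leq 1/10<1$, so $\varepsilon$ is a legal input to Theorem~\ref{thm:almostp}. Using the monotonicity $\|f\|_{L^m}\leq\|f\|_{L^{2m}}$ of $L^p$-norms on a probability space, the almost-periodicity bound reads
\[
  \|f*\mu_V-f\|_{L^{2m}}\leq \varepsilon\|f\|_{L^m}^{1/2}+\varepsilon^2\leq \frac{\|f\|_{L^{2m}}}{10}+\frac{\|f\|_{L^{2m}}}{100}\leq\frac{\|f\|_{L^{2m}}}{2},
\]
so the triangle inequality gives $\|f*\mu_V\|_{L^{2m}}\geq \|f\|_{L^{2m}}/2\geq 5\alpha$. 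Combined with the Young's-inequality observation above, $\|g\|_{L^{2m}}\geq 5\alpha$. Since $\|g\|_{L^1}=\|1_A\|_{L^1}\|\mu_V\|_{L^1}=\alpha$, the standard inequality $\|g\|_{L^{2m}}^{2m}\leq\|g\|_{L^\infty}^{2m-1}\|g\|_{L^1}$ forces $\|g\|_{L^\infty}\geq 5^{2m/(2m-1)}\alpha\geq 5\alpha$, which is precisely the desired density increment.

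The codimension produced by Theorem~\ref{thm:almostp} is $\lesssim_{\varepsilon,\alpha}2m/\varepsilon^2=200m/\|f\|_{L^{2m}}\leq 20m/\alpha$ by the hypothesis. Since $\varepsilon\geq\sqrt{10\alpha}/10$, any implicit $\log(1/\varepsilon)$ factor is comparable to $\log(1/\alpha)$ and so is absorbed into the notation $\lesssim_\alpha$, yielding the advertised codimension bound $\lesssim_\alpha m/\alpha$. I do not foresee a serious obstacle: the whole content of the argument is to scale $\varepsilon$ with $\|f\|_{L^{2m}}^{1/2}$ (rather than with a fixed power of $\alpha$) so that the almost-periodicity error is a constant fraction of the very quantity it is allowed to degrade, and then to remember to pass via Young's inequality from the $L^{2m}$ control of $f*\mu_V$ to the pointwise control of $1_A*\mu_V$ itself before extracting the dense coset.
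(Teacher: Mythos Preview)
Your proof is correct and follows essentially the same route as the paper: apply Theorem~\ref{thm:almostp} with $p=2m$, use monotonicity of $L^p$-norms and the reverse triangle inequality to get $\|\mu_A*1_A*\mu_V\|_{L^{2m}}\geq 5\alpha$, then pass to $\|1_A*\mu_V\|_{L^\infty}$. The only cosmetic differences are that the paper fixes $\varepsilon=\sqrt{\alpha}/100$ rather than your self-referential $\varepsilon=\|f\|_{L^{2m}}^{1/2}/10$ (these are comparable since $\|f\|_{L^{2m}}\geq 10\alpha$), and the paper goes directly from $\|f*\mu_V\|_{L^{2m}}$ to $\|f*\mu_V\|_{L^\infty}\leq\|1_A*\mu_V\|_{L^\infty}$ in one step, whereas you interpose Young's inequality and the interpolation $\|g\|_{L^{2m}}^{2m}\leq\|g\|_{L^\infty}^{2m-1}\|g\|_{L^1}$.
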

\begin{proof}
  Applying Theorem~\ref{thm:almostp} with $p=2m$ and $\varepsilon=\sqrt{\alpha}/100$, say,
  gives us a subspace~$V$ of codimension $\lesssim_{\alpha}m/\alpha$ for which
  \begin{equation*}
    \|\mu_A*1_A*\mu_V-\mu_A*1_A\|_{L^{2m}}\leq\frac{\sqrt{\alpha}}{100}\|\mu_A*1_A\|_{L^m}^{1/2}+\frac{\alpha}{10000}.
  \end{equation*}
  Thus, by the reverse triangle inequality,
  \begin{equation*}
    \|\mu_A*1_A*\mu_V\|_{L^{2m}}\geq\|\mu_A*1_A\|_{L^{2m}}-\left(\frac{\sqrt{\alpha}}{100}\|\mu_A*1_A\|_{L^m}^{1/2}+\frac{\alpha}{10000}\right).
  \end{equation*}
  By hypothesis, $\|\mu_A*1_A\|_{L^{2m}}\geq10\alpha$, and, since we are on a probability
  space, $\|\mu_A*1_A\|_{L^m}\leq\|\mu_A*1_A\|_{L^{2m}}$. Hence,
  $\|\mu_A*1_A*\mu_V\|_{L^{2m}}$ is easily at least $5\alpha$. To finish, note that, again
  because we are on a probability space,
  \begin{equation*}
    5\alpha\leq \|\mu_A*1_A*\mu_V\|_{L^{2m}}\leq \|\mu_A*1_A*\mu_V\|_{L^{\infty}}\leq\|1_A*\mu_V\|_{L^\infty},
  \end{equation*}
  since $\mu_A$ has mean $1$. So $\|1_A*\mu_V\|_{L^\infty}\geq 5\alpha$, which precisely
  means that $A$ has density at least $5\alpha$ on a coset of $V$.
\end{proof}

We end this subsection by stating the finite field model version of an
$L^\infty$-almost-periodicity result used by \textcite{BloomSisask2020} in the ``spectral
boosting'' phase of their argument. This will be relevant to our discussion in
Section~\ref{sec:BS}.
\begin{lemm}[\cite{SchoenSisask2016}, Theorem~3.2]
  \label{lem:Linfty}
  Let $0<\varepsilon<1/2$ and $S,M,L\subset\mathbf{F}_3^n$ where $S$ has density $\sigma$
  and $|M|/|L|=\nu$. There exists a subspace $W\leq \mathbf{F}_3^n$ of codimension at most
  $\lesssim _{\nu\sigma\varepsilon}\varepsilon^{-2}$ such that
  \begin{equation*}
    \|\mu_S*\mu_M*1_L*\mu_V-\mu_S*\mu_M*1_L\|_{L^\infty}\leq\varepsilon.
  \end{equation*}
\end{lemm}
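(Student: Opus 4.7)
The plan is to apply the Croot--Sisask probabilistic almost-periodicity framework, specialized to deliver uniform ($L^\infty$) rather than $L^p$ control. Write $f := \mu_S * \mu_M * 1_L = \mu_S * g$, where $g := \mu_M * 1_L$ is the convolution of the uniform probability measure on $M$ with the indicator of $L$. Crucially, $g$ is pointwise bounded, since $g(x) = |M \cap (x-L)|/|M| \in [0,1]$, and in particular $\|f\|_\infty \leq 1$.

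For a parameter $k$ to be fixed later, I would sample $s_1, \ldots, s_k$ independently from the distribution $\mu_S$ and form the random empirical approximant
\[
\psi_{\mathbf{s}}(x) := \frac{1}{k}\sum_{i=1}^k g(x - s_i).
\]
Then $\mathbf{E}_{\mathbf{s}}[\psi_{\mathbf{s}}(x)] = (\mu_S * g)(x) = f(x)$ for every $x$, and since each summand lies in $[0,1]$, Hoeffding's inequality gives $\Pr[\,|\psi_{\mathbf{s}}(x) - f(x)| > \eta\,] \leq 2e^{-2k\eta^2}$ for each fixed $x$. The essential structural observation is that shifting the empirical is the same as shifting the sample: $\tau_w \psi_{\mathbf{s}} = \psi_{\mathbf{s}+w}$, where $\mathbf{s}+w := (s_1+w,\ldots,s_k+w)$. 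Consequently, if both $\psi_{\mathbf{s}}$ and $\psi_{\mathbf{s}+w}$ are uniformly $\eta$-close to $f$, then $\|\tau_w f - f\|_\infty \leq 2\eta$ by the triangle inequality. Averaging these events over $\mathbf{s}$ and $w$, a second-moment argument shows that for $k$ chosen of order $\varepsilon^{-2}$ (up to polylogarithmic factors in $1/(\sigma\nu\varepsilon)$), the set $T \subseteq \mathbf{F}_3^n$ of almost-periods $w$ has density at least some $\lambda = \lambda(\sigma,\nu,\varepsilon)$.

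Having produced this large set of almost-periods, I would extract a subspace $W$ via a Chang/Bogolyubov-type argument: using Lemma~\ref{lem:chang}, the $\delta$-large spectrum of $1_T$ spans a subspace of dimension $\lesssim_\lambda \delta^{-2}$, and its annihilator is a subspace $W \leq \mathbf{F}_3^n$ of codimension $\lesssim_{\sigma\nu\varepsilon}\varepsilon^{-2}$. A Fourier-inversion calculation using $\|f * \mu_W - f\|_\infty \leq \sum_{\xi \notin W^\perp} |\widehat{f}(\xi)|$, combined with the fact that $W$ essentially lies in $T$, then gives the claimed bound. The principal obstacle is avoiding a dimension-dependent ($\log|G| = n\log 3$) cost from a naive union bound in the Hoeffding step, which would spoil the codimension estimate. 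This is circumvented by the standard Croot--Sisask device of first trading the worst-case pointwise bound for an averaged ($L^p$) bound, whose concentration is dimension-free, and only then recovering $L^\infty$ control by exploiting the boundedness of $g$ and the group-translation symmetry of the sampling.
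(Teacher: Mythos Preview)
The paper does not supply a proof of this lemma; it is stated with attribution to \textcite{SchoenSisask2016}, Theorem~3.2, so there is no argument in the paper to compare against directly. Your overall framework is the right one: Croot--Sisask random sampling to manufacture a dense set $T$ of almost-periods, followed by a Bogolyubov--Chang step to locate a subspace inside iterated sumsets of $T$. You also correctly identify the central obstacle (a na\"ive Hoeffding union bound over $x\in G$ would cost a factor of $\log|G|$) and the standard remedy (establish dimension-free $L^p$ concentration first).

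There is, however, a genuine gap in how you arrange the pieces. By sampling from $S$ you spend the convolution with $\mu_S$ on the empirical approximant, so after passing to $L^p$ you obtain $L^p$-almost-periods of $f=\mu_S*g$ itself, and there is no remaining convolution available to carry out the $L^p\to L^\infty$ upgrade. Neither ``the boundedness of $g$'' nor the Fourier bound $\|f*\mu_W-f\|_\infty\le\sum_{\xi\notin W^\perp}|\widehat f(\xi)|$ closes this gap: the latter sum is not small in general, and the phrase ``$W$ essentially lies in $T$'' is doing unexplained work. The correct arrangement is to sample from $M$ to obtain $L^p$-almost-periods of $g=\mu_M*1_L$ (with $p\asymp\log(1/\sigma)$), and then upgrade via Young's inequality: for each such almost-period $w$,
\[
\|\tau_w f-f\|_\infty=\|\mu_S*(\tau_w g-g)\|_\infty\le\|\mu_S\|_{L^{p'}}\,\|\tau_w g-g\|_{L^p}=\sigma^{-1/p}\,\|\tau_w g-g\|_{L^p},
\]
which is $O(\varepsilon)$ by the choice of $p$. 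The parameter $\nu=|M|/|L|$ enters through $\|g\|_{L^p}$ in the Marcinkiewicz--Zygmund moment estimate. Finally, once Bogolyubov--Chang yields $W\subset 2T-2T$, the conclusion follows by writing each $w\in W$ as $t_1+t_2-t_3-t_4$ with $t_i\in T$ and applying the triangle inequality directly; no Fourier inversion is needed at that stage.
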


\subsection{Higher energies of the large spectrum and additive non-smoothing}

In the course of their argument, \textcite{BatemanKatz2012} undertook a close study of the
additive and higher energies of large spectra of cap-sets. Let $A$ be a subset of an
abelian group $G$. The \emph{additive energy} $E_4(A)$ of $A$, a central notion in
additive combinatorics, is defined as the number of \emph{additive quadruples} in $A$,
\begin{equation*}
  E_4(A)\coloneqq \left|\{(a_1,a_2,a_3,a_4)\in A^4\mid a_1+a_2=a_3+a_4\}\right|.
\end{equation*}
Note the trivial upper and lower bounds $|A|^3\geq E_4(A)\geq |A|^2$. If $A$~is a subgroup
of~$G$, then $E_4(A)=|A|^3$ is maximal, and if, more generally, $A$~is a coset progression
of~$G$ of bounded rank, then $E_4(A)\asymp |A|^3$. At the opposite extreme, if $A$~is a
random subset of~$G$, then $E_4(A)$ is close to the minimum~$|A|^2$.

Additive energy is a convenient measure of the degree to which a set possesses additive
structure, and can be translated into other notions of additive structure, often with only
polynomial losses. For example, the Balog--Szemer\'edi--Gowers theorem says that sets with
large additive energy must contain a large subset with small doubling. Additive energy is
a particularly nice measure of additive structure to work with because it has a simple
expression in terms of the inverse Fourier transform,
\begin{equation*}
  E_4(A)=\mathbf{E}_{x\in G}|\widecheck{1_A}(x)|^4,
\end{equation*}
so that it can be manipulated using analytic methods.

There are also higher energies whose study has been useful in additive combinatorics. For
every natural number $m$, we define
\begin{equation*}
  E_{2m}(A)\coloneqq \left|\left\{(a_1,a_1',\dots,a_m,a_m')\in
    A^{2m}\mid \sum_{i=1}^ma_i=\sum_{i=1}^ma_i'\right\}\right|=\mathbf{E}_{x\in G}|\widecheck{1_A}(x)|^{2m}.
\end{equation*}
Note the trivial upper and lower bounds $|A|^{2m-1}\geq E_{2m}(A)\geq |A|^{m}$. By
H\"older's inequality $E_4(A)^{m-1}\leq E_{2m}(A)|A|^{m-2}$ for all $m>2$ and, similarly,
$E_8(A)^{\frac{m-1}{3}}\leq E_{2m}(A)|A|^{\frac{m-4}{3}}$ for all $m>4$. If we set $\tau$
to be the normalized additive energy $\tau\coloneqq  E_4(A)/|A|^3$, then
$E_{2m}(A)\geq \tau^{m-1}|A|^{2m-1}$ for all $m> 2$, and if we set $\sigma$ to be the
normalized higher energy $\sigma\coloneqq  E_8(A)/|A|^7$, then
$E_{2m}(A)\geq \sigma^{\frac{m-1}{3}}|A|^{2m-1}$ for all $m>4$. Thus, if $E_4(A)$ or
$E_8(A)$ is large, then so are the higher energies of $A$.

Chang's lemma says that large spectra have small dimension, which is one sense in
which they are additively structured. Large spectra also have decently large additive
energy. Indeed, writing $z(\xi)=\widehat{1_A}(\xi)/|\widehat{1_A}(\xi)|$ for each
$\xi\in\Spec_{\delta}(A)$ with $\widehat{1_A}(\xi)\neq 0$ and inserting the Fourier inversion formula for $1_A$, we have
\begin{align*}
  \alpha\delta\lvert\Spec_\delta(A)\rvert\leq&\sum_{\xi}\widehat{1_A}(\xi)z(\xi)1_{\Spec_\delta(A)}(\xi)\\
  =&\mathbf{E}_{x}1_A(x)\left(\sum_{\xi}z(\xi)e_3(-\xi\cdot
     x)1_{\Spec_\delta(A)}(\xi)\right)\\
  &\leq \alpha^{(2m-1)/2m}E_{2m}(\Spec_{\delta}(A))^{1/2m},
\end{align*}
by applying H\"older's inequality with exponents $2m$ and $\frac{2m}{2m-1}$, from which it
follows that
\begin{equation*}E_{2m}(\Spec_{\delta}(A))\geq\alpha\delta^{2m}\lvert\Spec_\delta(A)\rvert^{2m}.\end{equation*}

The first key insight of Bateman and Katz is that sets with a large higher energy contain
a positive density subset of small dimension. An instance of this relative to Bohr sets
was proven by \textcite{Bloom2016}, and combined with (a more technical version of) the
observation that large spectra have large higher energies to prove his alternative to
Chang's lemma. Bloom worked with relativized notions of additive and higher energies, and
obtained a conclusion involving relativized notions of dissociativity and
dimension. \textcite{BloomSisask2020} also required a variant of Bloom's result, and the
following lemma is a special case of their Lemma~7.9.
\begin{lemm}
  \label{lem:simplifiedenergy}
  There exists an absolute constant $C>0$ such that the following holds. Let
  $\Delta\subset\mathbf{Z}/N\mathbf{Z}$ and $\ell,m\geq 2$ be integers satisfying
  $\ell\geq 4m$. Then either
  \begin{enumerate}
  \item there exists a subset $\Delta'\subset\Delta$ such that
    \begin{equation*}
      |\Delta'|\geq\min\left(1,\frac{|\Delta|}{\ell}\right)\frac{m}{2\ell}|\Delta|
    \end{equation*}
    and $\dim{\Delta'}\ll\ell$, or
  \item $E_{2m}(\Delta)\leq (Cm/\ell)^{2m}|\Delta|^{2m}$.
  \end{enumerate}
\end{lemm}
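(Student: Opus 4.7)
I would argue by contraposition. Suppose option~(1) fails for some large absolute constant $c_0$: every subset $\Delta'\subseteq\Delta$ with $\dim\Delta'\leq c_0\ell$ satisfies $|\Delta'|<\lambda|\Delta|$, where $\lambda:=\min(1,|\Delta|/\ell)\cdot m/(2\ell)$. The target is the energy bound of option~(2). A preliminary reduction handles small $\Delta$: if $|\Delta|\leq\ell$, then $\dim\Delta\leq|\Delta|\leq\ell\leq c_0\ell$ and the required cardinality threshold is automatic, so $\Delta'=\Delta$ already verifies (1). Hence I may assume $|\Delta|>\ell$ and $\lambda=m/(2\ell)$.

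The key combinatorial input is a Rudin-type energy bound: for any dissociated $T\subset\mathbf{Z}/N\mathbf{Z}$, $E_{2m}(T)\leq(C_1 m)^m|T|^m$, reflecting the sub-Gaussian concentration of signed sums $\sum_{t\in T}\varepsilon_t t$ with independent Rademacher signs. My strategy is to iteratively extract dissociated subsets from $\Delta$: set $\Delta_1:=\Delta$, and at each stage $i$ let $T_i$ be a maximal dissociated subset of $\Delta_i$, with $\Delta_{i+1}:=\Delta_i\setminus T_i$. Maximality forces every element of $\Delta_i$ to be a $\{-1,0,1\}$-combination of $T_i$, so $\dim\Delta_i\leq|T_i|$; combined with the contrapositive hypothesis, this gives $|T_i|\geq c_0\ell$ at every stage with $|\Delta_i|\geq\lambda|\Delta|$. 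Halt at the first stage $J$ where $|R|:=|\Delta_J|<\lambda|\Delta|$.

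To assemble the pieces, I would use the Minkowski-type inequality $E_{2m}(X\sqcup Y)^{1/(2m)}\leq E_{2m}(X)^{1/(2m)}+E_{2m}(Y)^{1/(2m)}$ --- valid because $A\mapsto E_{2m}(A)^{1/(2m)}$ is, up to normalization, the $\ell^{2m}$-norm of $\widehat{1_A}$ --- applied to the partition $\Delta=T_1\sqcup\cdots\sqcup T_{J-1}\sqcup R$. The Rudin bound controls each dissociated piece by $E_{2m}(T_i)^{1/(2m)}\leq (C_1 m |T_i|)^{1/2}$, while the residual satisfies $E_{2m}(R)^{1/(2m)}\leq|R|\leq m|\Delta|/(2\ell)$. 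Cauchy--Schwarz against $\sum|T_i|\leq|\Delta|$, together with the lower bound $|T_i|\geq c_0\ell$ that caps the number of stages by $|\Delta|/(c_0\ell)$, then feeds these contributions into the target estimate $E_{2m}(\Delta)^{1/(2m)}\leq Cm|\Delta|/\ell$, i.e., the bound of (2).

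\textbf{Main obstacle.} The delicate point is the final balance: the Rudin estimate contributes at the rate $\sqrt{|T_i|}$ while the target scales linearly in $|\Delta|$, so a naive Cauchy--Schwarz over the extracted sets can bleed a factor of order $\sqrt{\ell/m}$ when $\ell\gg m$. Closing this gap cleanly will presumably require a refined extraction --- for instance, insisting on dissociated pieces of size comparable to $\ell^2/m$ rather than just $\ell$, or a weighted/inductive variant of the greedy scheme in which the hypothesis $\ell\geq 4m$ is used directly to make a geometric series converge. Calibrating $c_0$ so that the exponent $(Cm/\ell)^{2m}$ emerges without spurious polynomial losses in $|\Delta|$ or $\ell$ is the chief bookkeeping challenge.
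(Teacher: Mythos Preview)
The paper does not supply a proof of this lemma; it is quoted as a special case of Lemma~7.9 of \textcite{BloomSisask2020} and left at that. So there is no in-paper argument to compare against directly, and I will assess your proposal on its own terms and against what the cited source actually does.

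Your overall architecture --- contrapositive, greedy extraction of maximal dissociated subsets, Rudin on each piece, and subadditivity of $E_{2m}^{1/(2m)}$ to reassemble --- is natural and indeed forms the skeleton of the argument in \textcite{Bloom2016} and \textcite{BloomSisask2020}. You have also correctly located the loss: carrying the calculation through gives
\[
E_{2m}(\Delta)^{1/(2m)}\ \ll\ \sqrt{m/\ell}\,|\Delta|\,+\,(m/2\ell)|\Delta|,
\]
which on the energy scale is only $(Cm/\ell)^{m}|\Delta|^{2m}$, missing the target by a full factor of $(m/\ell)^{m}$. This is a genuine gap rather than bookkeeping: if one feeds the weaker estimate back into the intended application (combining the energy lower bound $E_{2m}(\Spec_\delta(A))\geq\alpha\delta^{2m}|\Spec_\delta(A)|^{2m}$ with this lemma to deduce Bloom's refinement of Chang's lemma), one recovers only the Chang bound $\dim\lesssim_\alpha 1/\delta^{2}$, not the improved $\lesssim_\alpha 1/\delta$.

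Your proposed repair --- insisting on dissociated pieces of size $\asymp\ell^{2}/m$ --- would indeed balance the arithmetic, but it is simply not available from the hypothesis. The failure of option~(1) says only that every subset of $\Delta$ of size at least $(m/2\ell)|\Delta|$ has dimension exceeding $c_{0}\ell$; this forces each maximal dissociated $T_{i}$ to have $|T_{i}|>c_{0}\ell$, but nothing prevents $|T_{i}|=c_{0}\ell+1$ at every stage, and no enlargement of $c_{0}$ bridges a gap that grows like $\ell/m$. The ``weighted/inductive variant'' you allude to is too vague to evaluate, and I do not see a way to make the pure subadditivity route deliver the exponent $2m$.

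The argument in \textcite{BloomSisask2020} (building on \textcite{Bloom2016} and ideas of Shkredov) does not proceed by Minkowski over a dissociated partition. The extra leverage comes from dualising: if $\|\widecheck{1_{\Delta}}\|_{2m}$ is large one produces a witness $g$ with $\|g\|_{2m/(2m-1)}=1$ for which $\sum_{\delta\in\Delta}|\widehat{g}(\delta)|$ is large, and then exploits the \emph{quadratic} Chang--Rudin bound $\sum_{d\in D}|\widehat{g}(d)|^{2}\leq Cm$ for dissociated $D$, together with an iteration over level sets of $|\widehat{g}|$. It is the square here, absent from the straight $L^{2m}$-triangle-inequality approach, that buys back the missing factor of $(m/\ell)^{m}$. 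I would recommend reading \S7 of \textcite{BloomSisask2020} directly rather than trying to patch the decomposition route.
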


When $A$~is a cap-set, by dyadic pigeonholing we can find a $1\geq\delta\gg\alpha$ for
which $\lvert\Spec_{\delta}(A)\rvert\gtrsim_{\alpha}\delta^{-3}$ and~\eqref{eq:largel2} holds. If
$\delta$~is substantially larger than $\alpha$, say $\delta>K^2\alpha$ for~$K$ a very
small power of~$\alpha^{-1}$, then the (finite field model version of) Lemma~\ref{lem:L2Bohr}
combined with the lower bound
$E_{2m}(\Spec_{\delta}(A))\geq\alpha\delta^{2m}\lvert\Spec_\delta(A)\rvert^{2m}$ and repeated
applications of (the finite field model version of) Lemma~\ref{lem:simplifiedenergy}
produces a density-increment of strength $[K,1/\alpha K;1]$ or $[1/K,1;1]$, both of which
would be good enough to obtain the bound $r_3(\mathbf{F}_3^n)\ll\frac{3^n}{n^{1+c}}$ for
some small $c$. So, now suppose that
$K^2\alpha\geq\delta\gg\alpha$. In this case we must also have
$1/\delta^3\lesssim_\alpha\lvert\Spec_\delta(A)\rvert\leq K^2/\delta^3$, and if one of
$E_4(\Spec_\delta(A))$ or $E_8(\Spec_\delta(A))$ is substantially larger than their
minimal values of $\delta^{-7}$ and $\delta^{-15}$, respectively, say
$E_4(\Spec_\delta(A))\geq L\delta^{-7}$ or $E_8(\Spec_\delta(A))\geq L\delta^{-15}$ for~$L$ equal to another small power of~$\alpha^{-1}$, then, by our earlier discussion, the higher
energies of $\Spec_\delta(A)$ must be large, so that we can again obtain a good enough
density-increment by again combining Lemma~\ref{lem:L2Bohr} with repeated applications of
Lemma~\ref{lem:simplifiedenergy}.

The only remaining case to handle in the proof of Bateman and Katz is when
\begin{equation}
  \label{eq:addnonsmooth}
  \delta\approx \alpha,\ \lvert\Spec_\delta(A)\rvert\approx\delta^{-3},\ 
  \frac{E_4(\Spec_\delta(A))}{\lvert\Spec_\delta(A)\rvert^3}\approx\delta^2,\ \text{and}\  \frac{E_8(\Spec_\delta(A))}{\lvert\Spec_\delta(A)\rvert^7}\approx \delta^{6},
\end{equation}
where we will temporarily use $\approx$ to hide small powers of $\alpha^{-1}$. Recall that if
$\tau$ is the normalized additive energy and $\sigma$ is the normalized $E_8$-energy, then
$\sigma\geq\tau^3$. Thus, $E_8(\Spec_\delta(A))$ is about as small as it can be given the
size of $E_4(\Spec_\delta(A))$. \textcite{BatemanKatz2012} call sets with this property
\emph{additively non-smoothing} and all other sets \emph{additively
  smoothing}. Additive energy measures the additive structure of $A$, and the $E_8$-energy
similarly measures the additive structure of $A+A$. Thus, a set is additively smoothing if
its sumset is substantially more structured than itself, and additively non-smoothing if
forming the sumset does not improve the additive structure. For example, a random subset
of $\mathbf{F}_3^n$ is additively smoothing, while an affine subspace is additively
non-smoothing. Two slightly more elaborate examples of additively non-smoothing sets,
highlighted in \textcite{BatemanKatz2011}, are, for parameters $M\geq 1$ and
$1>\gamma>0$, sets of the form $H+R$ where $H$ is a subgroup of order $M^{1-\gamma}$ and
$R$ is a random set of size $M^{\gamma}$, and unions of $M^{\gamma/2}$ random subspaces of
order $M^{1-\gamma/2}$.

The second key insight of Bateman and Katz is that it is possible to classify additively
non-smoothing sets, and that such a classification could be used to deduce a strong
density-increment. They proved a structure theorem saying, roughly, that if
$S\subset\mathbf{F}_3^n$ is additively non-smoothing, then a large portion of $S$ can be
decomposed into a union of sumsets of the form $X+H$, where $H$ is very additively
structured. By applying this result to $S=\Spec_\delta(A)$, they eventually managed to
obtain a strong density-increment.

\textcite{BloomSisask2020}, too, needed a structure theorem for additively non-smoothing
sets, now also relative to ``additive frameworks'' of large spectra of Bohr sets, and
using relative notions of additive and higher energies. Proving such a result is the most
difficult and complex part of their argument, and required them to come up with a more
robust proof in the finite field model setting that could be relativized. We will not give
the definition of an additive framework, nor the precise definition of an additively
non-smoothing set (relative to an additive framework). The following is a simplified
portrayal of the structure theorem of \textcite[Theorem~9.2]{BloomSisask2020}.
\begin{roughtheo}
  Let $\tau\leq 1/2$ be a parameter, $G$ be a finite abelian group, and $\tilde{\Gamma}$
  be a suitable additive framework. If $E_4(\Delta)/|\Delta|^3=\tau$ and $\Delta$ is
  non-smoothing relative to $\tilde{\Gamma}$, then there exist subsets $X,H\subset\Delta$
  and $1\geq\gamma\gg\tau$ such that
  \begin{equation}
    \label{eq:Structure1}
    |H|\asymp\gamma|\Delta|\qquad\text{and}\qquad|X|\asymp\frac{\tau}{\gamma}|\Delta|,
  \end{equation}
  and
  \begin{equation}
    \label{eq:Structure2}
    \langle 1_X\circ 1_X,1_H\circ 1_H\circ 1_{\Gamma_{\text{top}}}\rangle\gg|H|^2|X|.
  \end{equation}
\end{roughtheo}
In the finite field model setting, $\tilde{\Gamma}$ can be taken to be trivial, so that
the condition~\eqref{eq:Structure2} becomes
$\langle 1_X\circ 1_X,1_H\circ 1_H\rangle\gg|H|^2|X|$.  If desired, one can derive a
structure theorem of the form of that of Bateman and Katz by iterating this lemma and
applying the asymmetric Balog--Szemer\'edi--Gowers theorem.

\section{The argument of Bloom and Sisask}\label{sec:BS}

The argument of \textcite{BloomSisask2020} broadly follows the path of Bateman and Katz,
but working relative to Bohr sets instead of subspaces. Bloom and Sisask had to overcome
multiple significant obstacles in the integer setting that were not present in the finite
field model setting, several of which we have already mentioned. One obstacle not yet
mentioned (because we did not give any details on the proof in the finite field model
setting) is that one part of the argument that Bateman and Katz used to go from their
structure theorem for additively non-smoothing sets to a strong density-increment does not
have an efficient analogue in the integer setting.

Bloom and Sisask, like Bateman and Katz, can produce density-increments sufficiently large
to prove Theorem~\ref{thm:bs} through the methods discussed in
Section~\ref{sec:Background}, unless~\eqref{eq:largel2} and~\eqref{eq:addnonsmooth} hold
for some $\alpha K\gg\delta\gg\alpha$, where $K$ is a small power of $\alpha^{-1}$. This means
that $\Spec_\delta(A)$ is additively non-smoothing, and Bloom and Sisask can then
iteratively apply their structure theorem to decompose a significant portion of
$\Spec_\delta(A)$ into a union of structured sets, from which they can deduce a
density-increment provided that the structured pieces are all, individually,
sufficiently large, say of size $\Omega(L/\alpha)$ for $L$ some other small power of
$\alpha^{-1}$. The final remaining case is thus when~\eqref{eq:largel2}
and~\eqref{eq:addnonsmooth} hold for some $\alpha K\gg\delta\gg\alpha$ and the structure
theorem for additively non-smoothing sets produces an $H$ of size at most
$L/\alpha$. Bloom and Sisask derive a strong density-increment in this situation via a new
argument that they call ``spectral boosting''.

Bloom and Sisask call this last piece of their proof ``spectral boosting'' because they
obtain a density-increment of the strength one would obtain if the structured set $H$ were
contained in the $\sqrt{\alpha}$-large spectrum, instead of the $\alpha$-large
spectrum. Thus, the elements $H$ can be viewed as morally ``boosted'' to a larger
spectrum. To finish off this section, we will give a sketch of the spectral boosting
argument in the finite field model setting.

Suppose, for the sake of illustration, that $\delta=\alpha$,
$\lvert\Spec_\alpha(A)\rvert\asymp 1/\alpha^3$, $H\subset \Spec_\alpha(A)$ satisfies
$|H|\asymp 1/\alpha$ and $\dim{H}\lesssim_\alpha 1$, and $X\subset \Spec_\alpha(A)$
satisfies $|X|\asymp 1/\alpha^3$ and $E_4(X,H)\gg |X||H|^2$.  We may remove $0$ from $X$
without affecting the lower bound on the relative energy by much, so set
$X'\coloneqq \Spec_{\alpha}(A)\setminus\{0\}$, so that
\begin{equation*}
 \sum_{\xi_1+\xi_2=\xi_3+\xi_4}1_H(\xi_1)1_{X'}(\xi_2)1_H(\xi_3)1_{X'}(\xi_4)\gg|X'||H|^2.
\end{equation*}
Consider the function $f\coloneqq 1_A*1_A-\alpha^2$, which has Fourier transform $\widehat{f}$
equal to $|\widehat{1_A}|^2$ on $X'$. Since $|\widehat{1_A}|\geq\alpha^2$ on $X'$ by
definition, $\widehat{f}\geq \alpha^4 1_X$, so that we can replace the first instance of
$1_{X'}$ with $\alpha^{-4}\widehat{f}$ to obtain
\begin{equation*}
 \sum_{\xi_1+\xi_2=\xi_3+\xi_4}1_H(\xi_1)\widehat{f}(\xi_2)1_H(\xi_3)1_{X'}(\xi_4)\gg\alpha^4|X'||H|^2.
\end{equation*}
Taking inverse Fourier transforms then gives
\begin{equation*}
  \alpha^4|X'||H|^2\ll\mathbf{E}_x|\widecheck{1_{H}}(x)|^2f(x)\widecheck{1_{X'}}(x).
\end{equation*}
One important thing to note here is that $|\widecheck{1_H}|$ is invariant under shifts by
elements that annihilate $H$. These elements form a subspace $V\leq\mathbf{F}_3^n$ of
codimension at most the dimension of $H$. We would like to remove $\widecheck{1_{X'}}$
from the average on the right so that we can obtain a large correlation of $|f|$ with
$|\widecheck{1_H}|^2$, which will allow us to freely convolve with $1_V$ later in the
argument and eventually obtain a density-increment on a translate of a subspace of $V$ of
small codimension.

The easiest way to remove $\widecheck{1_{X'}}$ is to apply H\"older's inequality with
$p=1$ and $q=\infty$ to obtain
$\|\widecheck{1_{X'}}\|_{L^\infty}\cdot\mathbf{E}_x|\widecheck{1_{H}}(x)|^2|f(x)|$. If
$X'$ is not additively structured, then its inverse Fourier transform $\widecheck{1_{X'}}$
should, morally, behave like a random sum of characters, and thus be small, making this an
efficient use of H\"older's inequality. On the other hand, if $X'$~is additively
structured, we can already obtain a strong density-increment since $X'$~has positive
density in~$\Spec_\alpha(A)$.

To make the above intuition rigorous, we will apply H\"older's inequality with $q=m$ and
$p=\frac{m}{m-1}$ with large $m$, and then use the Cauchy--Schwarz inequality, yielding
\begin{align*}
  \mathbf{E}_x|\widecheck{1_{H}}(x)|^2f(x)\widecheck{1_X}(x) &=
                                                               \mathbf{E}_x(|\widecheck{1_{H}}(x)|^2f(x))^{1-1/m}\cdot(|\widecheck{1_{H}}(x)|^2f(x))^{1/m}\widecheck{1_{X'}}(x)
  \\
  &\leq
    \||\widecheck{1_{H}}|^2f\|_{L^{1}}^{\frac{m-1}{m}}\left(\mathbf{E}_{x}|\widecheck{1_{H}}(x)|^2|f(x)||\widecheck{1_{X'}}(x)|^{m}\right)^{1/m}
  \\
  &\leq
    \||\widecheck{1_{H}}|^2f\|_{L^{1}}^{\frac{m-1}{m}}\||\widecheck{1_{H}}|^2f\|_{L^{2}}^{\frac{1}{m}}E_{2m}(X')^{1/2m}.
\end{align*}
Parseval's identity and Cauchy--Schwarz give us
$\||\widecheck{1_{H}}|^2f\|_{L^{2}}^2\leq|H|^4/\alpha$, from which it follows that
\begin{equation*}
\alpha^{8m+1}|X'|^{2m}|H|^{4m-4}\ll \||\widecheck{1_{H}}|^2f\|_{L^{1}}^{2m-2}\cdot E_{2m}(X').
\end{equation*}
Now, we take $m\lesssim_{\alpha}1$, so that if the higher energy $E_{2m}(X')$ is large,
say $\gg (\alpha L|X'|)^{2m}$ for $L=\alpha^{-1/1000}$, then we can obtain a
density-increment of strength $[\alpha^{-1/1000},\alpha^{-999/1000};O(1)]$ as previously
discussed. We may therefore assume that $E_{2m}(X')\ll(\alpha L|X'|)^{2m}$, which yields
\begin{equation}
  \label{eq:sb1}
\frac{\alpha^2}{L}|H|\asymp\alpha^{O(1/m)}\alpha^{3}|H|^{2}\ll \mathbf{E}_x|\widecheck{1_{H}}(x)|^2|f(x)|.
\end{equation}

Note that if~\eqref{eq:sb1} held with $f$ in place of $|f|$, then
\begin{equation*}
  \frac{\alpha}{L}|H|\ll\sum_{\xi}1_{H}\circ 1_{H}(\xi)|\widehat{f_A}(\xi)|^2
\end{equation*}
by Parseval's identity, so that, by the pigeonhole principle, there exists a translate
$z+H$ of $H$ for which
\begin{equation*}
  \frac{\alpha}{L}\ll\sum_{\xi\in z+H}|\widehat{f_A}(\xi)|^2,
\end{equation*}
thus producing a very large density-increment. We do indeed have to deal
with $|f|$, however.

Observe that if $T\coloneqq \{x\mid f(x)\geq c\alpha^2\}$ and $T'\subset T$, then
\begin{equation*}
 \mathbf{E}_{x}1_A*1_A(x)1_{T'}(x)=\mathbf{E}_xf(x)1_{T'}(x)+\mathbf{E}_{x}\alpha^21_{T'}(x)\geq (1+c)\alpha^2\mu(T'),
\end{equation*}
which looks encouragingly similar to a strong density-increment. If we can show that $T$
has density $\frac{1}{K}\asymp\alpha^{1/500}$ on a translate of $u+V$, then we can take
$T'\coloneqq T\cap (u+V)$, from which it follows that there exists a translate $A'$ of $A$ and a
subset $S\subset V$ of density $\gg \frac{1}{K}$ in $V$ for which
\begin{equation*}
  \mathbf{E}_{x}1_S*1_{A'}(x)1_{A}(x)\geq (1+c)\alpha^2\mu(S).
\end{equation*}
By splitting up $1_A$ into a sum of indicator functions of $A$ intersected with cosets of
$V$ and applying the pigeonhole principle, we may replace $A'$ and $A$ with intersections
$A''$ and $A'''$ of $A$ with cosets of $V$, yielding an inequality of the form
\begin{equation*}
  \mathbf{E}_{x}1_S*1_{A''}(x)1_{A'''}(x)\geq (1+c)\alpha\mu(A'')\mu(S).
\end{equation*}
The expression on the left-hand side is a convolution $1_S*1_{A''}\circ 1_{A'''}$, which
can be approximated by applying Lemma~\ref{lem:Linfty} relative to $V$ with
$\varepsilon\asymp 1$ to find a subspace $W\leq V$ of codimension $\lesssim_{\alpha}1$ in
$V$ for which
\begin{equation*}
  \mathbf{E}_{x}1_S*1_{A''}(x)1_{A'''}*\mu_V(x)\geq (1+c)\alpha\mu(A'')\mu(S).
\end{equation*}
The existence of a density-increment of strength $[1,\tilde{O}_\alpha(1);O(1)]$ now follows
by applying H\"older's inequality with $p=1$ and $q=\infty$ and noting that
$\|1_S*1_{A''}\|_{L^1}=\mu(A'')\mu(S)$.

It thus remains to show that $T$ has density $\asymp \alpha^{1/500}$ on a translate of
$u+V$. The first step is to remove the absolute value bars around $f$ in~\eqref{eq:sb1} by
restricting to a subset on which $f$ is large and positive. Using again the identity
$r+|r|=2\max(r,0)$, we have
\begin{equation*}
  \frac{\alpha^2}{L}|H|\ll \mathbf{E}_x|\widecheck{1_{H}}(x)|^2\max(f(x),0).
\end{equation*}
Letting $T\coloneqq \{x\mid f(x)\geq c\alpha^2\}$ for some suitably small absolute constant $c$, it follows that
\begin{equation*}
  \frac{\alpha^2}{L}|H|\ll \mathbf{E}_x|\widecheck{1_{H}}(x)|^21_T(x)f(x).
\end{equation*}
We will remove $f$ from the average by applying H\"older's inequality with exponents
$p=2m$ and $q=\frac{2m}{2m-1}$ for $m\asymp\log(1/\alpha)$ and then the bound
$|\widecheck{1_{H}}|\leq|H|$ to obtain
\begin{align*}
  \mathbf{E}_x|\widecheck{1_{H}}(x)|^21_T(x)f(x)&\leq
  \|f\|_{L^{2m}}\left(\mathbf{E}_x|\widecheck{1_{H}}(x)|^{2+2/(2m-1)}1_T(x)\right)^{1-1/2m}
  \\
  &\leq \|f\|_{L^{2m}}\left(\frac{|H|}{\mathbf{E}_x|\widecheck{1_{H}}(x)|^{2}1_T(x)}\right)^{1/m}\left(\mathbf{E}_x|\widecheck{1_{H}}(x)|^{2}1_T(x)\right).
\end{align*}
Recalling the definition of $f$, by the triangle inequality,
$\|f\|_{L^{2m}}\leq \alpha^2+\|1_A*1_A\|_{L^{2m}}$. If $\|1_A*1_A\|_{L^{2m}}$ were
$\gg\alpha^{2-1/1000}$, then we would be able to obtain a density-increment of strength
$[\alpha^{-1/1000},\alpha^{-999/1000};O(1)]$ using (a finite field model version of)
Lemma~\ref{lem:convoinc}, which is certainly good enough. We may therefore proceed under
the assumption that $\|f\|_{L^{2m}}\ll \alpha^{2-1/1000}$. Since
$\mathbf{E}_x|\widecheck{1_{H}}(x)|^{2}1_T(x)\geq\alpha^2|H|$, the second term above is
$\ll \alpha^{-2/m}$. It therefore follows that
\begin{equation*}
  \frac{1}{L^2}|H|\ll\mathbf{E}_x|\widecheck{1_{H}}(x)|^{2}1_T(x)=\mathbf{E}_x|\widecheck{1_{H}}(x)|^{2}1_T*\mu_V(x).
\end{equation*}
Finally, by the pigeonhole principle, there must exist an $x$ for which
$1_T*\mu_V(x)\gg \frac{1}{L^2}$, i.e., $T$ has density $\gg\alpha^{1/500}$ on $x+V$.

\section{The Croot--Lev--Pach polynomial method and the work of Ellenberg--Gijswijt}\label{sec:EG}

Many proof techniques that fall under the umbrella of the polynomial method tend to follow
the same rough structure:
\begin{itemize}
\item First, the key data of the object of study is encoded in one or several polynomials,
  typically of low ``complexity''.
\item Then, the algebraic properties of low complexity polynomials are used to obtain the
  desired conclusion.
\end{itemize}
The Croot--Lev--Pach polynomial method also follows this outline. In this final section,
we will present a full proof of power-saving bounds in the cap-set problem using the
``slice rank'' method of \textcite{Tao2016}, which is a symmetric rephrasing of the
argument of \textcite{EllenbergGijswijt2017}.

To define slice rank, we first specify the functions of slice rank one.
\begin{defi}
  Let $S$ be a finite set, $k$ be a positive integer, and $\mathbf{F}$ be a field. We say
  that a function $f\colon S^k\to\mathbf{F}$ has \emph{slice rank one} if there exists an
  index $1\leq i\leq k$, a function $g\colon S\to\mathbf{F}$, and a function
  $h\colon S^{k-1}\to\mathbf{F}$ such that
  \begin{equation*}
    f(x_1,\dots,x_k)=g(x_i)h(x_1,\dots,x_{i-1},x_{i+1},\dots,x_k).
  \end{equation*}
\end{defi}
Thus, a function of $k$ variables on $S$ has slice rank one if it can be written as a
product of a function of one variable and a function of the remaining $k-1$
variables. Having defined the functions of slice rank one, the slice rank can now be
defined, analogously to other notions of rank such as tensor rank, as the minimum number
of rank one functions needed to represent a given function.
\begin{defi}
  Let $S$ be a finite set, $k$ be a positive integer, and $\mathbf{F}$ be a field. We say
  that a function $f\colon S^k\to\mathbf{F}$ has \emph{slice rank} at most $m$ if there exist
  $m$ functions $g_1,\dots,g_m\colon S^k\to\mathbf{F}$ of slice rank one such that
  \begin{equation*}
    f(x_1,\dots,x_k)=\sum_{j=1}^mg_j(x_1,\dots,x_k).
  \end{equation*}
  The \emph{slice rank} of $f$ is the smallest $m_0$ for which $f$ has slice rank at
  most $m_0$.
\end{defi}

Now suppose that $A\subset\mathbf{F}_3^n$ is a cap-set, and let $f\colon A\times A\times A\to\mathbf{F}_3$ denote the indicator function
 of the diagonal of $A\times A\times A$,
\begin{equation}
  \label{eq:samethree}
  f(x,y,z)\coloneqq 
  \begin{cases}
    1 & x=y=z \\
    0 &\text{otherwise.}
  \end{cases}
\end{equation}
The idea of the proof of Theorem~\ref{thm:eg} is to
\begin{itemize}
\item show that $f$ has slice rank exactly $|A|$,
\item express $f$ as an nice, explicit polynomial,
\item and then bound the slice rank of this polynomial,
\end{itemize}
thus producing a bound for the cardinality $|A|$. The assumption that $A$ is a cap-set is
only used in the second step of this outline, where it is, obviously, crucial.

We begin by proving that the slice rank of $f$ is $|A|$.
\begin{lemm}
  \label{lem:srequal}
  Let $S$ be a finite subset and $\mathbf{F}$ be a field, and define
  $f\colon S\times S\times S\to\mathbf{F}$ by~\eqref{eq:samethree}. Then the slice rank of $f$
  is $|S|$.
\end{lemm}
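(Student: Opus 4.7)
The plan is to prove the inequality in both directions, with the upper bound being essentially immediate and the lower bound requiring a contraction-and-rank argument familiar from tensor-rank lower bounds.

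For the upper bound, I would simply write
\[
  f(x,y,z) = \sum_{a\in S} 1_{\{a\}}(x)\cdot\bigl(1_{\{a\}}(y)1_{\{a\}}(z)\bigr),
\]
so that $f$ is a sum of $|S|$ slice-rank-one functions (each summand is a function of $x$ times a function of the remaining variables). Hence the slice rank of $f$ is at most $|S|$.

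For the lower bound, suppose we have a decomposition of slice rank $m$, which after grouping by which variable is isolated reads
\[
  f(x,y,z) = \sum_{i=1}^{m_1} a_i(x)\,b_i(y,z) + \sum_{j=1}^{m_2} c_j(y)\,d_j(x,z) + \sum_{k=1}^{m_3} e_k(z)\,\ell_k(x,y),
\]
with $m_1+m_2+m_3 = m$. I would then consider the linear subspace
\[
  V \coloneqq \Bigl\{v\colon S\to\mathbf{F} \;\Big|\; \sum_{x\in S} a_i(x)v(x) = 0 \text{ for all } 1\leq i\leq m_1\Bigr\},
\]
which has dimension at least $|S|-m_1$. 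Contracting $f$ in the first coordinate against any $v\in V$ kills the first block and yields
\[
  \sum_{x\in S} v(x)f(x,y,z) = \sum_{j=1}^{m_2} c_j(y)\tilde d_j(z) + \sum_{k=1}^{m_3} e_k(z)\tilde\ell_k(y),
\]
a $2$-variable function whose rank as a bilinear form on $S\times S$ is at most $m_2+m_3$. On the other hand the left-hand side equals $v(y)\,1_{y=z}$, i.e., the diagonal matrix with entries $v(y)$, whose rank is exactly $|\supp(v)|$. Therefore every $v\in V$ satisfies $|\supp(v)|\leq m_2+m_3$.

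To conclude I need the elementary linear-algebra fact that any subspace $V\subset\mathbf{F}^S$ of dimension $d$ contains an element of support at least $d$: pick $d$ coordinates on which the restriction map $V\to\mathbf{F}^d$ is surjective (such coordinates exist because $\dim V=d$), and take a preimage of the all-ones vector. Applying this to our $V$ produces $v\in V$ with $|\supp(v)|\geq|S|-m_1$, so $|S|-m_1\leq m_2+m_3$, giving $m\geq|S|$ as required. The only step that is more than formal is this last lemma about supports in subspaces, but it is entirely routine; the content of the argument is really the contraction trick, whereby the rank-one decomposition in three variables is converted into an ordinary matrix rank statement.
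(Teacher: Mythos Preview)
Your proof is correct and follows essentially the same approach as the paper's: both contract against a vector annihilating one block of slice-rank-one terms to reduce to an ordinary matrix-rank statement about a diagonal matrix, and both invoke the elementary fact that a $d$-dimensional subspace of $\mathbf{F}^S$ contains a vector of support at least $d$. The only cosmetic differences are that you argue directly rather than by contradiction and prove the support lemma via a surjective coordinate projection rather than a maximal-support argument.
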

\begin{proof}
  Since
  \begin{equation*}
    f(x,y,z)=\sum_{s\in S}1_{\{s\}}(x)1_{\{s\}}(y)1_{\{s\}}(z),
  \end{equation*}
  the function $f$ certainly has slice rank at most $|S|$. To show that the slice rank is
  at least $|S|$, suppose by way of contradiction that the slice rank equals some positive
  integer $t<|S|$. Then we may assume, without loss of generality, that there are
  functions $g_1,\dots,g_t\colon S\to\mathbf{F}$ and $h_1,\dots,h_t\colon S\times S\to\mathbf{F}$ such
  that
  \begin{equation*}
    f(x,y,z)=\sum_{i=1}^{t_1}g_i(x)h_i(y,z)+\sum_{i=t_1+1}^{t_2}g_i(y)h_i(x,z)+\sum_{i=t_2+1}^{t}g_i(z)h_i(x,y)
  \end{equation*}
  for some nonnegative integers $0\leq t_1\leq t_2< t$.

  The idea is now to find a function $r\colon S\to\mathbf{F}$ whose support
  $\supp{r}\coloneqq\{z\in S: r(z)\neq 0\}$ has size larger than $t_2$ for which
  $\sum_{z\in S}r(z)g_i(z)=0$ for all $i=t_2+1,\dots,t$, so that multiplying both sides of
  the above by $r(z)$ and summing over $z$ yields
  \begin{equation*}
    F(x,y) = \sum_{i=1}^{t_1}g'_i(x)h'_i(y)+\sum_{i=t_1+1}^{t_2}g'_i(x)h'_i(y),
  \end{equation*}
  where
  \begin{equation*}
    F(x,y)\coloneqq 
    \begin{cases}
      r(x) & x=y \\
      0 & \text{otherwise}
    \end{cases}
  \end{equation*}
  and $g_1',\dots,g_{t_2}',h_1',\dots,h_{t_2}'\colon S\to\mathbf{F}$.  In other words, the
  $|S|\times|S|$ diagonal matrix $D$ with entries $(r(s))_{s\in S}$ along the diagonal has
  rank at most $t_2$. But the rank of $D$ equals the number of nonzero elements along its
  diagonal, $\lvert\supp{r}\rvert$, which is greater than $t_2$, giving us a contradiction.

  To show that such a function $r$ exists, set $t'\coloneqq t-t_2$, and let $V$ denote the
  vector space over $\mathbf{F}$ of functions $S\to\mathbf{F}$ orthogonal to
  $g_{t_2+1},\dots,g_{t}$, so that $\dim V\geq|S|-t'$. Since $|S|-t'\geq |S|-t>0$,
  certainly $V$ contains some function that is not identically zero. Let $r\in V$ be a
  function with maximal support. If $\lvert\supp{r}\rvert<|S|-t'$, then the subspace of
  functions in $V$ that vanish on $\supp{r}$ has dimension at least one, and so contains
  some nonzero function $r'$. But then $r+r'$ would have strictly larger support than $r$,
  which contradicts $r$ having maximal support. So we must have
  $\lvert\supp{r}\rvert=|S|-t'$, i.e., $\lvert \supp{r}\rvert=t_2+|S|-t>t_2$.
\end{proof}

Next, we will express $f$ as a low-complexity polynomial, and derive an upper bound for
its slice rank.
\begin{lemm}
  \label{lem:srbound}
  Let $A\subset\mathbf{F}_3^n$ be a cap-set and $f\colon A\times A\times A\to\mathbf{F}_3^n$ be
  defined as in~\eqref{eq:samethree}. Then the slice rank of $f$ is at most
  \begin{equation}
    \label{eq:srbound}
    M:=3\cdot\left|\left\{(a_1,\dots,a_n)\in\{0,1,2\}^n\mid \sum_{i=1}^na_i<\frac{2n}{3}\right\}\right|.
\end{equation} 
\end{lemm}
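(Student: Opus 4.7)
My plan is to realize $f$ on $A^3$ as the restriction of an explicit polynomial in $3n$ variables, and then to bound its slice rank by grouping monomials according to which variable has small total degree.

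First I would use the cap-set hypothesis to produce a polynomial identity. For $x,y,z\in\mathbf{F}_3^n$, the relation $x+y+z=0$ is equivalent to $y=(x+z)/2$ in characteristic $3$, i.e., to $x,y,z$ forming a three-term arithmetic progression in the order $x,y,z$. So $A$ being a cap-set amounts to the statement that, for $x,y,z\in A$, one has $x+y+z=0$ if and only if $x=y=z$. Since $1-u^2$ is the indicator function of $u=0$ on $\mathbf{F}_3$, this yields on $A\times A\times A$ the identity
\[
  f(x,y,z)\;=\;\prod_{i=1}^n\bigl(1-(x_i+y_i+z_i)^2\bigr).
\]

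Next I would expand this polynomial into monomials. Each factor contains only terms of total degree $0$ or $2$ in the triple $(x_i,y_i,z_i)$, so the product expands as a sum of monomials $c_{\alpha\beta\gamma}\,x^\alpha y^\beta z^\gamma$ with $\alpha,\beta,\gamma\in\{0,1,2\}^n$ and $\alpha_i+\beta_i+\gamma_i\in\{0,2\}$ for every $i$. In particular $|\alpha|+|\beta|+|\gamma|\leq 2n$, so for each monomial at least one of the three total degrees satisfies $|\alpha|<2n/3$, $|\beta|<2n/3$, or $|\gamma|<2n/3$; the boundary case where all three degrees equal $2n/3$ can only occur when $3\mid n$ and is harmless, since such monomials can simply be appended to any one of the three buckets introduced below.

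Finally, I would assign each monomial to one of three buckets according to which variable realizes the minimum total degree, breaking ties arbitrarily. The $x$-bucket can be rewritten by grouping together all terms with a common value of $\alpha$:
\[
  \sum_{\substack{\alpha\in\{0,1,2\}^n\\|\alpha|<2n/3}} x^\alpha\,p_\alpha(y,z),
\]
which is a sum of at most $|\{a\in\{0,1,2\}^n:\sum_i a_i<2n/3\}|$ functions of slice rank $1$; the $y$- and $z$-buckets give analogous expressions with the roles of the variables permuted. Adding the three contributions yields the stated bound $M$ on the slice rank of $f$.

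The main obstacle is the polynomial identity in the first step, which is the essential Croot--Lev--Pach/Ellenberg--Gijswijt insight: once one reformulates the cap-set condition as the implication $x+y+z=0\Rightarrow x=y=z$ on $A$, and encodes the vanishing of a single coordinate through $1-u^2$, the remainder of the proof is a routine degree count combined with the grouping argument that has become standard in the slice-rank version of the polynomial method.
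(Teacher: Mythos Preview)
Your argument is correct and essentially identical to the paper's: express $f$ on $A^3$ as $\prod_i\bigl(1-(x_i+y_i+z_i)^2\bigr)$ via the cap-set hypothesis, expand into monomials of total degree at most $2n$, and sort them into three buckets according to which variable block has degree below $2n/3$. You even flag the boundary case $|\alpha|=|\beta|=|\gamma|=2n/3$ (possible only when $3\mid n$) that the paper passes over in silence---though note that ``appending to a bucket'' does not literally preserve the strict inequality defining $M$, since such monomials introduce new exponent vectors with $|\alpha|=2n/3$; the honest fix is to replace $<$ by $\leq$ in the statement, which is immaterial for the downstream application.
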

\begin{proof}
  Since $A$ is a cap-set, the only solutions to the equation $x+y+z=0$ with $x$, $y$, and
  $z$ all in $A$ are the trivial solutions $x=y=z$. This means that
  \begin{equation*}
    f(x,y,z)=1_{\{0\}}(x+y+z).
  \end{equation*}
  Note that for any element $w\in\mathbf{F}_3$, we have
  \begin{equation*}
    1-w^2=
    \begin{cases}
      1 & w=0 \\
      0 & w\neq 0.
    \end{cases}
  \end{equation*}
  Thus,
  \begin{equation*}
    1_{\{0\}}(x+y+z)=\prod_{i=1}^n(1-(x_i+y_i+z_i)^2)\eqqcolon P(x,y,z).
  \end{equation*}
  The polynomial $P$ has degree $2n$, and every monomial appearing in $P$ takes the form
  \begin{equation*}
    x_1^{a_1}\cdots x_n^{a_n}y_1^{b_1}\dots y_n^{b_n}z_1^{c_1}\cdots z_n^{c_n},
  \end{equation*}
  where $0\leq a_i,b_j,c_k\leq 2$ for each $1\leq i,j,k\leq n$ and
  $\sum_{i=1}^na_i+\sum_{j=1}^nb_j+\sum_{k=1}^nc_k\leq 2n$. For each such monomial, one of
  $\sum_{i=1}^na_i$, $\sum_{j=1}^nb_j$, or $\sum_{k=1}^nc_k$ is less than $2n/3$. Thus,
  $P$ can be written as
  \begin{align*}
    P(x,y,z)=&\sum_{\substack{0\leq a_1,\dots,a_n\leq 2 \\
    a_1+\dots+a_n<2n/3}}x_1^{a_1}\cdots x_n^{a_n}g_{\mathbf{a}}(y,z)+\sum_{\substack{0\leq
    b_1,\dots,b_n\leq 2 \\ b_1+\dots+b_n<2n/3}}y_1^{b_1}\cdots
    y_n^{b_n}h_{\mathbf{b}}(x,z) \\
    &+\sum_{\substack{0\leq c_1,\dots,c_n\leq 2 \\ c_1+\dots+c_n<2n/3}}z_1^{c_1}\cdots z_n^{c_n}r_{\mathbf{c}}(x,y)
  \end{align*}
  for some functions
  $g_{\mathbf{a}},h_{\mathbf{b}},r_{\mathbf{c}}\colon S\times S\to\mathbf{F}$. It therefore follows that
  the slice rank of $1_{\{0\}}(x+y+z)$ is at most~\eqref{eq:srbound}.
\end{proof}

Now consider a sequence of $n$ random variables $X_1,\dots,X_n$ taking values
independently and uniformly in $\{0,1,2\}$. The probability that
$X\coloneqq X_1+\dots+X_n$ is smaller than $2n/3$ equals $M$ times $1/3^{n+1}$, and this
probability is at most $2e^{-n/18}$ by Hoeffding's inequality. Hence, the slice rank of
$f$ is $\ll(3/e^{1/18})^n$ by Lemma~\ref{lem:srbound}, and so
$|A|\ll(3/e^{1/18})^n\approx 2.838^n$ by Lemma~\ref{lem:srequal}. Obtaining the bound of
$\ll 2.756^n$ appearing in the theorem of Ellenberg and Gijswijt just requires a less
crude estimation of $M$, and is straightforward, though a bit tedious.

\printbibliography

\end{document}
